\newtheorem{theorem}{Theorem}[section]
\newtheorem{proposition}[theorem]{Proposition}
\newtheorem{lemma}[theorem]{Lemma}
\newtheorem{cor}[theorem]{Corollary}
\theoremstyle{definition}
\newtheorem{definition}[theorem]{Definition}
\newtheorem{example}[theorem]{Example}
\numberwithin{equation}{section}
\newcommand\nn{\mathbb{N}}
\newcommand\qq{\mathbb{Q}}
\newcommand\rr{\mathbb{R}}
\newcommand\zz{\mathbb{Z}}
\newcommand\pval{\mathsf{v}_p}
\keywords{Puiseux monoids, atomicity, prime-reciprocal monoids, cyclic rational semirings}
\subjclass[2010]{Primary: 13A05, 20M13; Secondary: 16Y60}
\begin{document}

	\mbox{}
	\title{Atomicity and boundedness of \\ monotone Puiseux monoids}
	\author{Felix Gotti}
	\address{Mathematics Department\\UC Berkeley\\Berkeley, CA 94720}
	\email{felixgotti@berkeley.edu}
	\author{Marly Gotti}
	\address{Mathematics Department\\University of Florida\\Gainesville, FL 32611}
	\email{marlycormar@ufl.edu}
	\date{\today}
	
	\begin{abstract}
		In this paper, we study the atomic structure of Puiseux monoids generated by monotone sequences. To understand this atomic structure, it is often useful to know whether the monoid has a bounded generating set. We provide necessary and sufficient conditions for the atomicity and boundedness to be transferred from a monotone Puiseux monoid to all its submonoids. Finally, we present two special subfamilies of monotone Puiseux monoids and fully classify their atomic structure.
	\end{abstract}
	
	\maketitle
	
	\section{Introduction} \label{sec:intro}
	
	Puiseux monoids were introduced in \cite{fG16}, where their atomic structure is studied. They are a natural generalization of numerical semigroups; however, while numerical semigroups are always atomic and minimally generated by their finite sets of atoms (irreducible elements), Puiseux monoids exhibit a very complex atomic structure. For instance, there are nontrivial Puiseux monoids having no atoms at all (i.e., being \emph{antimatter}), whereas others, failing to be atomic, contain infinitely many irreducible elements.
	
	Most of the Puiseux monoids whose sets of atoms have been determined can be ``nicely" generated, meaning that they contain generating sets with convenient properties such as finite, bounded, strongly bounded, etc. The simplicity of such generating sets allows us to have more control over the Puiseux monoid under study and, as a consequence, to better describe its atomic structure.
	
	In this paper, we will continue the study of the atomic structure of nicely generated Puiseux monoids, focusing now on those generated by a monotone sequence of rationals; we call them \emph{monotone} Puiseux monoids. Although the atomic behavior of this family will play the fundamental role here, we also study its boundedness. Even though boundedness does not seem to be related to atomicity \emph{a priori}, by imposing certain boundedness conditions we can control drastically the atomic structure. The following results, whose terminology will be recalled in the next section, shed light on this fact.
	
	\begin{theorem} \cite[Theorem 5.2]{fG16}
		Let $R = \{ r_n \mid n \in \nn \}$ be a strongly bounded subset of rationals generating the Puiseux monoid $M$. If $\mathsf{d}(r_n)$ divides $\mathsf{d}(r_{n+1})$, the sequence $\{\mathsf{d}(r_n)\}$ is unbounded, and $\mathsf{n}(R) \cap p\zz$ is finite for all prime $p$, then $M$ is antimatter.
	\end{theorem}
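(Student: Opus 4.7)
The plan is to show that no generator $r_n \in R$ is an atom; since every element of $M$ is a non-negative integer combination of generators, this suffices to conclude $M$ is antimatter. Given $r_n$, I will produce indices $k_1, k_2 > n$ and positive integers $c_1, c_2$ such that $r_n = c_1 r_{k_1} + c_2 r_{k_2}$, providing a nontrivial factorization of $r_n$ in $M$.

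Write $a_i = \mathsf{n}(r_i)$ and $d_i = \mathsf{d}(r_i)$. Strong boundedness yields $N$ with $a_i \leq N$ for every $i$; combined with the divisibility-chain hypothesis, the unboundedness of $\{d_i\}$ forces $d_i \to \infty$, and hence $r_i \leq N/d_i \to 0$. I first pick $k_1 > n$ with $r_{k_1} < r_n/N$ (and, to avoid trivialities, $d_{k_1} > d_n$). Next I invoke the hypothesis that $\mathsf{n}(R) \cap p\zz$ is finite for each prime $p$: since $a_{k_1}$ has only finitely many prime divisors and each one divides only finitely many $a_j$'s, I can choose $k_2 > k_1$ with $\gcd(a_{k_1}, a_{k_2}) = 1$.

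Multiplying the target identity $c_1 r_{k_1} + c_2 r_{k_2} = r_n$ through by $d_{k_2}$ and using $d_n, d_{k_1} \mid d_{k_2}$ reduces it to the linear Diophantine equation
\[
c_1 A + c_2 B = C, \qquad A := a_{k_1} d_{k_2}/d_{k_1},\ B := a_{k_2},\ C := a_n d_{k_2}/d_n.
\]
Because $\gcd(a_{k_2}, d_{k_2}) = 1$ and, by construction, $\gcd(a_{k_1}, a_{k_2}) = 1$, we have $\gcd(A, B) = 1$. The Sylvester--Frobenius theorem then guarantees a positive integer solution $(c_1, c_2)$ as soon as $C > AB$, and a direct calculation rewrites this inequality as $r_n > a_{k_2} r_{k_1}$; since $a_{k_2} \leq N$, this is implied by our choice $r_{k_1} < r_n/N$.

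The main difficulty is coordinating the two index choices so that coprimality of numerators, smallness of $r_{k_1}$, and the Frobenius bound $C > AB$ are achieved simultaneously. The decay $r_i \to 0$ (a direct consequence of strong boundedness together with the unboundedness of the denominators) is what makes the smallness condition attainable, while the finiteness of $\mathsf{n}(R) \cap p\zz$ is precisely what yields a coprime pair of numerators. Once both ingredients are in place, Sylvester--Frobenius supplies $c_1, c_2 \geq 1$ with $r_n = c_1 r_{k_1} + c_2 r_{k_2}$, exhibiting $r_n$ as reducible and completing the argument.
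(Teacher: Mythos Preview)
This theorem is not proved in the present paper; it is quoted in the introduction from \cite{fG16} as motivation, so there is no in-paper proof to compare your attempt against.

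Evaluated on its own, your argument is correct. The reduction to the Diophantine equation $c_1A+c_2B=C$ with $A=a_{k_1}d_{k_2}/d_{k_1}$, $B=a_{k_2}$, $C=a_n d_{k_2}/d_n$ is clean, the verification $\gcd(A,B)=\gcd(a_{k_1},a_{k_2})$ is right, and the rewriting of the Sylvester--Frobenius threshold $C>AB$ as $r_n>a_{k_2}r_{k_1}$ (hence guaranteed by $r_{k_1}<r_n/N$) is exactly what is needed. Since $\mathcal{A}(M)$ is contained in any generating set, showing each $r_n$ is reducible does yield $\mathcal{A}(M)=\emptyset$.

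One point worth flagging: your coprimality step reads the hypothesis ``$\mathsf{n}(R)\cap p\zz$ is finite'' as asserting that for each prime $p$ only finitely many \emph{indices} $j$ satisfy $p\mid a_j$. Taken literally the hypothesis speaks only of the \emph{set} of numerator values, which is already finite by strong boundedness and hence vacuous---and under that literal reading the conclusion can fail (e.g.\ $d_n=5^n$, $a_1=3$, $a_n=2$ for $n\ge2$ gives $r_1=3/5$ an atom). So your interpretation is the only one that makes the statement true, and with it your proof goes through.
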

	
	\begin{theorem} \cite[Theorem 5.8]{fG16}
		Let $M$ be a strongly bounded finite Puiseux monoid. Then $M$ is atomic if and only if $M$ is isomorphic to a numerical semigroup.
	\end{theorem}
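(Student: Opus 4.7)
The implication $(\Leftarrow)$ is immediate: every numerical semigroup is atomic, and atomicity is preserved under monoid isomorphism.

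For $(\Rightarrow)$, my strategy rests on a structural fact I would invoke (and verify if needed): a Puiseux monoid is isomorphic to a numerical semigroup as soon as it is finitely generated as a monoid. Concretely, if $M = \langle g_1,\dots,g_k\rangle$ with $g_i \in \qq_{>0}$, then setting $D = \mathrm{lcm}(\mathsf{d}(g_1),\dots,\mathsf{d}(g_k))$ gives $M \subseteq \tfrac{1}{D}\nn_0$; rescaling by $D$ embeds $M$ into $\nn_0$, and dividing the resulting integer generators by their gcd realises $M$ as a numerical semigroup. Consequently, it suffices to show that $M$ is finitely generated; and since $M$ is atomic, $M = \langle \mathcal{A}(M)\rangle$, so the task reduces to proving $|\mathcal{A}(M)| < \infty$.

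This is where the two hypotheses enter. Strong boundedness supplies a uniform bound on the numerators (in lowest terms) of a generating set, and hence in particular on $\mathsf{n}(\mathcal{A}(M))$, since any atom of a Puiseux monoid must appear in every generating set. If $\mathcal{A}(M)$ were nonetheless infinite, the denominators of atoms would then have to be unbounded. At that point I would invoke the ``finite'' hypothesis on $M$ to derive a contradiction, most plausibly by selecting an element of $M$ whose atomic decompositions are forced to involve atoms of arbitrarily large denominator, conflicting with atomicity (or with the finiteness hypothesis itself, depending on the precise way ``finite'' is made to interact with strong boundedness here).

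I expect the technical heart of the argument to be precisely this last step: translating the qualitative hypotheses (bounded numerators of atoms and the finiteness condition on $M$) into a quantitative obstruction ruling out an infinite sequence of atoms with unbounded denominators. A natural line of attack is a pigeonhole argument on the bounded set $\mathsf{n}(\mathcal{A}(M)) \subseteq \nn$, combined with a divisibility analysis on the $\mathsf{d}(a)$'s to construct an element of $M$ with too-restrictive decompositions. Once $\mathcal{A}(M)$ is known to be finite, the reduction to a numerical semigroup is purely formal, as outlined above.
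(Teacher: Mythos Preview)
This theorem is not proved in the present paper: it appears only in the introduction as a citation of \cite[Theorem~5.8]{fG16}, so there is no proof here to compare your proposal against. That said, your outline has a genuine gap worth naming.

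Your reductions are sound: the reverse implication is trivial, and for the forward one it does suffice to show $|\mathcal{A}(M)|<\infty$, after which the passage to a numerical semigroup is exactly as you describe (and is essentially Lemma~\ref{lem:denominator set of like-NS PM} of this paper). You are also right that strong boundedness forces $\mathsf{n}(\mathcal{A}(M))$ to be bounded, since $\mathcal{A}(M)$ sits inside every generating set.

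The gap is that you never engage with what ``finite'' means for a Puiseux monoid, and the entire content of the argument lives there. The term is a defined notion from \cite{fG16} (not the everyday sense), and without it your final paragraph is pure hand-waving: you say you would ``invoke the finite hypothesis to derive a contradiction'' via some pigeonhole or divisibility argument, but you do not propose any concrete mechanism. Note that the paper itself exhibits (just after Proposition~\ref{prop:SB PM are strongly decreasing}) an atomic Puiseux monoid $M_r$ that is \emph{finite} in the intended sense yet has infinitely many atoms; it escapes the theorem only because it is not strongly bounded. So the interaction between ``finite'' and ``strongly bounded'' is delicate, and a proof must actually exploit both in a specific way. As written, your proposal identifies the correct target ($|\mathcal{A}(M)|<\infty$) but supplies no argument for reaching it.
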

	
	Since every submonoid of a Puiseux monoid $M$ is again a Puiseux monoid, it is natural to ask whether a property of $M$ is inherited by all its submonoids. We say that a property $\mathsf{P}$ is \emph{hereditary} on $M$ if every submonoid of $M$ satisfies $\mathsf{P}$. Additionally, we say that a property $\mathsf{P}$ is hereditary on a class $\mathcal{C}$ of monoids if it is hereditary on every member of $\mathcal{C}$. As part of our study of monotone Puiseux monoids, we will find subfamilies where being atomic or monotone is hereditary.
	
	 In Section~\ref{sec:Background and Notation}, we establish the notation we will be using throughout this paper. In Section~\ref{sec:increasing PM}, we study the structure of Puiseux monoids that can be generated by increasing sequences. By contrast, Puiseux monoids that can be generated by decreasing sequences are investigated in Section~\ref{sec:decreasing PM}. Later, in Section~\ref{sec:primary Puiseux Monoids}, we focus on the study of a special class of decreasing Puiseux monoids whose members are precisely those generated by reciprocals of primes; we show that atomicity is hereditary on this class. Finally, in Section~\ref{sec:Multiplicative Cyclic Puiseux Monoids}, we describe the atomic structure of Puiseux monoids generated by geometric sequences (which are monotone), characterizing, in particular, those that are atomic.

	\medskip
	\section{Background and Notation} \label{sec:Background and Notation}
	
	To begin, we recall some basic definitions related to commutative semigroups and their sets of atoms. Reference material on commutative semigroups can be found in \cite{pG01} of Grillet. In addition, the monograph \cite{GH06} of Geroldinger and Halter-Koch offers extensive background information on atomic monoids and non-unique factorization theory, while \cite{aG09} of Geroldinger provides a more concise survey on the same area.
	
	The double-struck symbols $\mathbb{N}$ and $\mathbb{N}_0$ denote the sets of positive integers and non-negative integers, respectively. If $r$ is a real number, then we write $\zz_{\ge r}$ instead of $\{z \in \zz \mid z \ge r\}$; with a similar intention, we write $\qq_{\ge r}$ and $\qq_{> r}$. If $r \in \qq_{> 0}$, then the unique $a,b \in \nn$ such that $r = a/b$ and $\gcd(a,b)=1$ are denoted by $\mathsf{n}(r)$ and $\mathsf{d}(r)$, respectively. For $R \subseteq \qq_{>0}$, the sets $\mathsf{n}(R) = \{\mathsf{n}(r) \mid r \in R\}$ and $\mathsf{d}(R) = \{\mathsf{d}(r) \mid r \in R\}$ are called \emph{numerator set} and \emph{denominator set} of $R$, respectively.
	
	The unadorned term \emph{monoid} always means commutative cancellative monoid. If $M$ is a monoid, then because every monoid here is assumed to be commutative, we will use additive notation. In particular, $``+"$ denotes the operation of $M$, while $0$ denotes the identity element. We use the symbol $M^\bullet$ to denote the set $M \! \setminus \! \{0\}$. For $a,c \in M$, we say that $a$ \emph{divides} $c$ \emph{in} $M$ and write $a \mid_M c$ if $c = a + b$ for some $b \in M$. We write $M = \langle S \rangle$ when $M$ is generated by $S$, and say that $M$ is \emph{finitely generated} if it can be generated by a finite set.
	
	The set of units of $M$ is denoted by $M^\times$. An element $a \in M \! \setminus \! M^\times$ is \emph{irreducible} or an \emph{atom} if $a = u + v$ implies that either $u \in M^\times$ or $v \in M^\times$. We denote the set of atoms of $M$ by $\mathcal{A}(M)$. Every monoid $M$ in this paper will be \emph{reduced}, which means that $M^\times$ contains only the zero element. Therefore $\mathcal{A}(M)$ will be contained in each generating set. The monoid $M$ is \emph{atomic} if $M = \langle \mathcal{A}(M) \rangle$. On the other hand, if $\mathcal{A}(M)$ is empty, then we say that $M$ is \emph{antimatter}. Antimatter domains are defined in the same way as antimatter monoids are; they have been investigated by Coykendall et al. in \cite{CDM99}.
	
	A \emph{numerical semigroup} $N$ is a cofinite submonoid of the additive monoid $\nn_0$. Every numerical semigroup has a unique minimal set of generators, which happens to be finite. Additionally, if a numerical semigroup $N$ is minimally generated by positive integers $a_1 , \dots, a_n$, then $\gcd(a_1, \dots, a_n) = 1$ and $\mathcal{A}(N) = \{a_1, \dots, a_n\}$. Thus, every numerical semigroup is an atomic monoid containing finitely many atoms. A great introduction to the realm of numerical semigroups can be found in \cite{GR09} by Garc\'ia-S\'anchez and Rosales.
	
	A \emph{Puiseux monoid} is an additive submonoid of $\qq_{\ge 0}$. Albeit a natural generalization of numerical semigroups, Puiseux monoids are not always atomic. The following two results will be used throughout this paper without explicit mention.
	
	\begin{proposition}
		A Puiseux monoid $M$ is atomic if and only if it contains a minimal set of generators, which, in this case, must be $\mathcal{A}(M)$.
	\end{proposition}

	\begin{proof}
		Because Puiseux monoids are reduced, it follows from \cite[Proposition~1.1.7]{GH06}.
	\end{proof}

	\begin{proposition} \cite[Theorem~3.10]{fG16}
		If $0$ is not a limit point of a Puiseux monoid $M$, then $M$ is atomic.
	\end{proposition}
	
	The atomicity of Puiseux monoids was studied in \cite{fG16}, where the reader can find further results related to the atomic structure of these objects. A Puiseux monoid $M$ is said to be \emph{bounded} if $M$ can be generated by a bounded subset of rational numbers. There are Puiseux monoids that are not bounded, as we shall see below. Besides, we say that $M$ is \emph{strongly bounded} if $M$ can be generated by a set of rationals $R$ such that $\mathsf{n}(R)$ is bounded. Obviously, every finitely generated Puiesux monoid is strongly bounded. However, there are strongly bounded Puiseux monoids that fail to be finitely generated. The family of strongly bounded Puiseux monoids is strictly contained in that of bounded Puiseux monoids. The following example illustrates these observations.
	
	\begin{example}
		Let $P$ denote the set of primes. Consider the Puiseux monoid
		\[
			M_1 = \langle A_1 \rangle, \ \text{ where } \ A_1 = \bigg\{\frac{p^2-1}{p} \ \bigg{|} \ p \in P \bigg\}.
		\]
		An elementary divisibility argument will reveal that $\mathcal{A}(M_1) = A_1$. Because $A_1$ is an unbounded set, it follows that $M_1$ is not a bounded Puiseux monoid. On the other hand, let us consider the strongly bounded Puiseux monoid
		\[
			M_2 = \langle A_2 \rangle, \ \text { where } \ A_2 = \bigg\{\frac{1}{p} \ \bigg{|} \ p \in P \bigg\}.
		\]
		As in the previous case, it is not hard to verify that $\mathcal{A}(M_2) = A_2$. Therefore $M_2$ is a strongly bounded Puiseux monoid that is not finitely generated. We will say more about $M_2$ in Section~\ref{sec:primary Puiseux Monoids}. Finally, consider the bounded Puiseux monoid
		\[
			M_3 = \langle A_3 \rangle, \ \text{ where } \ A_3 = \bigg\{\frac{p-1}{p} \ \bigg{|} \ p \in P \bigg\}.
		\]
		Once again, $\mathcal{A}(M_3) = A_3$ follows from an elementary divisibility argument. As a result,~$M_3$ cannot be strongly bounded.
	\end{example}

	We conclude this section by recalling $p$-adic valuations and introducing the concept of finiteness for Puiseux monoids. Fix a prime number $p$. For a nonzero integer $a$, we define $\pval(a)$ to be the exponent of the maximal power of $p$ dividing $a$, and set $\pval(0) = \infty$. Moreover, for $b \in \zz \! \setminus \! \{0\}$, we define $\pval(a/b) := \pval(a) - \pval(b)$. The map $\pval$ is called the $p$-\emph{adic valuation}. It is not hard to verify that $\pval$ is \emph{semi-additive}, i.e.,
	\begin{equation*}
		\pval(r + s) \ge \min\{\pval(r), \pval(s) \} \text{ for all } \ r,s \in \qq^\times. \label{eq:semiadditivity of valuations}
	\end{equation*}
	Let $P$ be a set of primes. A Puiseux monoid $M$ \emph{over} $P$ is a Puiseux monoid such that $\pval(m) \ge 0$ for every $m \in M$ and $p \notin P$. If $P$ is finite, then we say that $M$ is a \emph{finite} Puiseux monoid over $P$. The Puiseux monoid $M$ is said to be \emph{finite} if there exists a finite set of primes $P$ such that $M$ is finite over $P$.

	\medskip
	\section{Increasing Puiseux Monoids} \label{sec:increasing PM}
	
	We are in a position now to begin our study of the atomic structure of Puiseux monoids generated by monotone sequences.
	
	\begin{definition}
		A Puiseux monoid $M$ is said to be \emph{increasing} (resp., \emph{decreasing}) if it can be generated by an increasing (resp., decreasing) sequence. A Puiseux monoid is \emph{monotone} if it is either increasing or decreasing.
	\end{definition}
	
	Not every Puiseux monoid is monotone, as the next example illustrates.
	
	\begin{example} \label{ex:bounded PM that is neither decreasing nor increasing}
		Let $p_1, p_2, \dots$ be an increasing enumeration of the set of prime numbers. Consider the Puiseux monoid $M = \langle A \cup B \rangle$, where
		\[
			A = \bigg\{ \frac{1}{p_{2n}} \ \bigg{|} \ n \in \nn \bigg\} \ \text{ and } \ B = \bigg\{ \frac{p_{2n-1} - 1}{p_{2n-1}} \ \bigg{|} \ n \in \nn \bigg\}.
		\]
		It follows immediately that both $A$ and $B$ belong to $\mathcal{A}(M)$. So $M$ is atomic, and $\mathcal{A}(M) = A \cup B$. Every generating set of $M$ must contain $A \cup B$ and so will have at least two limit points, namely, $0$ and $1$. Since every monotone sequence of rationals can have at most one limit point in the real line, we conclude that $M$ is not monotone.
	\end{example}
	
	The following proposition describes the atomic structure of the family of increasing Puiseux monoids.
	
	\begin{proposition} \label{prop:atoms of increasing monoids}
		Every increasing Puiseux monoid is atomic. Moreover, if $\{r_n\}$ is an increasing sequence of positive rationals generating a Puiseux monoid $M$, then $\mathcal{A}(M) = \{r_n \mid r_n \notin \langle r_1, \dots, r_{n-1} \rangle\}$.
	\end{proposition}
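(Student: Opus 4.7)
The plan is to first establish atomicity by showing that $0$ is not a limit point of $M$, and then to characterize the atoms by exploiting the order structure of the generating sequence.

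For atomicity, I would observe that, since $\{r_n\}$ is increasing and consists of positive rationals, each $r_n \ge r_1 > 0$. Every nonzero element of $M$ is a non-negative integer combination $\sum c_i r_i$ with at least one $c_i \ge 1$, so it is bounded below by $r_1$. Thus $0$ is not a limit point of $M$, and by the fact recalled in Section~\ref{sec:Background and Notation}, $M$ is atomic.

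For the characterization of the atoms, I would start from the observation that, because $M$ is reduced, $\mathcal{A}(M)$ is contained in every generating set, hence $\mathcal{A}(M) \subseteq \{r_n : n \in \nn\}$; it then remains only to decide which $r_n$ are atoms. I plan to prove both inclusions by contradiction. For the forward inclusion, suppose $r_n$ is an atom but $r_n \in \langle r_1,\dots,r_{n-1}\rangle$, and write $r_n = \sum_{i<n} c_i r_i$ with $c_i \in \nn_0$; the total $\sum c_i$ cannot equal $1$ (else $r_n = r_i$ with $i < n$, contradicting the strict increase of the sequence), so $\sum c_i \ge 2$, and this expresses $r_n$ as a sum of at least two nonzero elements of $M$, contradicting atomicity.

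The reverse inclusion is the step I expect to require the most care. Assuming $r_n \notin \langle r_1,\dots,r_{n-1}\rangle$ and $r_n = u + v$ with $u, v \in M^\bullet$, one has $u, v > 0$ and hence $u, v < r_n$. The crux is that any generator $r_i$ appearing with positive coefficient in a non-negative integer representation of $u$ must satisfy $r_i \le u < r_n$, and the monotonicity of $\{r_n\}$ then forces $i < n$; the same holds for $v$. Therefore $u, v \in \langle r_1,\dots,r_{n-1}\rangle$, and so $r_n = u + v$ lies in the same submonoid, contrary to hypothesis. The only subtlety is purely bookkeeping: translating the inequality $r_i < r_n$ into the index inequality $i < n$, which follows from $r_j \ge r_n$ for every $j \ge n$.
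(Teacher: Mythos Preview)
Your approach matches the paper's: both deduce atomicity from $r_1$ being a positive lower bound for $M^\bullet$, and both show that an $r_n$ with $r_n \notin \langle r_1,\dots,r_{n-1}\rangle$ is an atom by arguing that any nontrivial decomposition can only use generators strictly below $r_n$, hence only $r_1,\dots,r_{n-1}$.

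One small slip deserves mention. In your forward inclusion you appeal to ``the strict increase of the sequence'' to dismiss the case $\sum c_i = 1$, but the hypothesis only says $\{r_n\}$ is increasing, and repetitions are not excluded. If $r_n = r_i$ for some $i<n$, your contradiction does not fire, and in fact the index-wise implication ``$r_n \in \mathcal{A}(M) \Rightarrow r_n \notin \langle r_1,\dots,r_{n-1}\rangle$'' can fail (take $r_1 = r_2$; then $r_2$ is an atom yet $r_2 \in \langle r_1\rangle$). The paper sidesteps this by first observing that the set $A = \{r_n : r_n \notin \langle r_1,\dots,r_{n-1}\rangle\}$ still generates $M$ (the discarded $r_n$'s are redundant by construction), after which $\mathcal{A}(M) \subseteq A$ is automatic from the fact that the atom set is contained in every generating set. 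Your argument is easily repaired along the same lines, or alternatively by passing to the least index at which the atom value first appears, where all earlier generators are genuinely strictly smaller.
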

	
	\begin{proof}
		The fact that $M$ is atomic follows from observing that $r_1$ is a lower bound for~$M^\bullet$ and so $0$ is not a limit point of $M$. To prove the second statement, set
		\[
			A = \{r_n \mid r_n \notin \langle r_1, \dots, r_{n-1} \rangle\},
		\]
		and rename the elements of $A$ in a strictly increasing sequence (possibly finite), namely, $\{a_n\}$. Note that $M = \langle A \rangle$ and $a_n \notin \langle a_1, \dots, a_{n-1} \rangle$ for any $n \in \nn$. Since $a_1$ is the smallest nonzero element of $M$, we have that $a_1 \in \mathcal{A}(M)$. Suppose now that $n$ is a natural such that $2 \le n \le |A|$. Because $\{a_n\}$ is a strictly increasing sequence and $a_n \notin \langle a_1,\dots, a_{n-1} \rangle$, one finds that $a_n$ cannot be written as a sum of elements in $M$ in a non-trivial manner. Hence $a_n$ is an atom for every $n \in \nn$ and, therefore, we can conclude that $\mathcal{A}(M) = A$.
	\end{proof}
	
	Now we use Proposition~\ref{prop:atoms of increasing monoids} to show that every Puiseux monoid that is not isomorphic to a numerical semigroup has an atomic submonoid with infinitely many atoms. Let us first prove the next lemma.
	
	\begin{lemma} \label{lem:denominator set of like-NS PM}
		Let $M$ be a nontrivial Puiseux monoid. Then $\mathsf{d}(M^\bullet)$ is finite if and only if $M$ is isomorphic to a numerical semigroup.
	\end{lemma}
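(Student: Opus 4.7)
My plan is to establish the two implications separately. The backward direction is quick, while the forward direction is the substantive one and hinges on a standard fact about submonoids of $\nn_0$.

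For the ($\Leftarrow$) direction, I would use that every numerical semigroup is finitely generated, so if $M$ is isomorphic to one then $M$ is finitely generated, say $M = \langle r_1, \dots, r_k \rangle$ with $r_i \in \qq_{>0}$. Putting $D = \operatorname{lcm}(\mathsf{d}(r_1), \dots, \mathsf{d}(r_k))$, any $m \in M^\bullet$ has the form $\sum c_i r_i$ with $c_i \in \nn_0$ and therefore admits a representation as a fraction with denominator $D$; so $\mathsf{d}(m)$ divides $D$, which shows that $\mathsf{d}(M^\bullet)$ is contained in the finite set of positive divisors of $D$.

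For ($\Rightarrow$), assume $\mathsf{d}(M^\bullet)$ is finite and set $d = \operatorname{lcm} \mathsf{d}(M^\bullet)$. Since $\mathsf{d}(m) \mid d$ for every $m \in M$, the map $m \mapsto dm$ is an injective monoid homomorphism $M \to \nn_0$, realizing $dM$ as a submonoid of $\nn_0$. Setting $g = \gcd((dM)^\bullet)$, every element of $dM$ is a multiple of $g$, so further rescaling by $1/g$ yields a monoid isomorphism $M \to N$, where $N := (d/g)M$ is a submonoid of $\nn_0$ with $\gcd(N^\bullet) = 1$ by construction.

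The main obstacle is showing that $N$ is actually a numerical semigroup, that is, that $\nn_0 \setminus N$ is finite. To clear it, I would pick finitely many $n_1, \dots, n_s \in N^\bullet$ whose greatest common divisor is already $1$ (such a finite subset exists because only finitely many primes can appear as common divisors of any infinite descending chain of gcds in $\nn$), and then invoke the standard fact that a finitely generated submonoid of $\nn_0$ with gcd equal to $1$ has finite complement in $\nn_0$. Since $\langle n_1, \dots, n_s \rangle \subseteq N \subseteq \nn_0$, it follows that $N$ is cofinite, hence a numerical semigroup, and $M \cong N$ completes the proof.
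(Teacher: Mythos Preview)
Your proof is correct and follows essentially the same route as the paper: in both directions you and the authors scale by the least common multiple of the relevant denominators, embedding $M$ into $\nn_0$ for the forward direction and bounding $\mathsf{d}(M^\bullet)$ via a finite generating set for the backward one. The only difference is that the paper invokes as known the fact that every nontrivial submonoid of $\nn_0$ is isomorphic to a numerical semigroup, whereas you unpack that step explicitly via the $\gcd$ rescaling and the Frobenius-type cofiniteness argument.
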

	
	\begin{proof}
		Suppose first that $\mathsf{d}(M^\bullet)$ is finite. Since $M$ is not trivial, $M^\bullet$ is not empty. Take $a \in \nn$ to be the least common multiple of $\mathsf{d}(M^\bullet)$. Since $aM$ is a submonoid of $\nn_0$, it is isomorphic to a numerical semigroup. Furthermore, the map $\varphi \colon M \to aM$ defined by $\varphi(x) = ax$ is a monoid isomorphism. Thus, $M$ is isomorphic to a numerical semigroup. Conversely, suppose that $M$ is isomorphic to a numerical semigroup. Because every numerical semigroup is finitely generated, so is $M$. Hence $\mathsf{d}(M^\bullet)$ is finite.
	\end{proof}
	
	\begin{proposition}
		If $M$ is a nontrivial Puiseux monoid, then it satisfies exactly one of the following conditions:
		\begin{enumerate}
			\item $M$ is isomorphic to a numerical semigroup;
			\vspace{3pt}
			\item $M$ contains an atomic submonoid with infinitely many atoms.
		\end{enumerate}
	\end{proposition}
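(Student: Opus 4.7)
The plan is to first observe that conditions (1) and (2) are mutually exclusive, and then to establish the substantive direction: if $M$ is not isomorphic to a numerical semigroup, then $M$ contains an atomic submonoid with infinitely many atoms. Mutual exclusivity is quick: a monoid isomorphic to a numerical semigroup can be realized as a submonoid of $\nn_0$, and every submonoid of $\nn_0$ is finitely generated and hence has finitely many atoms; the same therefore holds for all its submonoids.

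For the substantive direction, the starting point is Lemma~\ref{lem:denominator set of like-NS PM}: if $M$ is not isomorphic to a numerical semigroup, then $\mathsf{d}(M^\bullet)$ is infinite. My strategy is to extract from $M$ a strictly increasing sequence $(r_n)$ in $M^\bullet$ such that $r_n \notin \langle r_1, \dots, r_{n-1} \rangle$ for every $n \ge 2$. Once such a sequence is produced, Proposition~\ref{prop:atoms of increasing monoids} applied to the increasing Puiseux monoid $M' := \langle r_n \mid n \in \nn \rangle$ gives $\mathcal{A}(M') = \{r_n \mid n \in \nn\}$, the required infinite atomic submonoid.

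The sequence will be built by induction, guided by the observation that if $r \in \langle r_1, \dots, r_{n-1}\rangle$, then $\mathsf{d}(r)$ must divide $L_{n-1} := \mathrm{lcm}(\mathsf{d}(r_1), \dots, \mathsf{d}(r_{n-1}))$. So at stage $n$ I seek $r_n \in M^\bullet$ with $r_n > r_{n-1}$ and $\mathsf{d}(r_n) \nmid L_{n-1}$. Since the positive divisors of $L_{n-1}$ form a finite set while $\mathsf{d}(M^\bullet)$ is infinite, some $s \in M^\bullet$ satisfies $\mathsf{d}(s) \nmid L_{n-1}$; any such $s$ automatically lies outside $\langle r_1, \dots, r_{n-1}\rangle$.

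The main obstacle is enforcing the bound $r_n > r_{n-1}$ without spoiling the denominator condition: adding arbitrary elements of $M$ to $s$ may shrink its denominator so that it slips into the divisors of $L_{n-1}$. The fix I have in mind is a coprime-multiplication trick, namely replacing $s$ by $ks$ for a positive integer $k$ with $\gcd(k, \mathsf{d}(s)) = 1$ and large enough that $ks > r_{n-1}$. Since $\gcd(k\,\mathsf{n}(s), \mathsf{d}(s)) = 1$, the denominator is unchanged, and $ks \in M$ because $M$ is closed under addition. Setting $r_n := ks$ closes the induction, and the application of Proposition~\ref{prop:atoms of increasing monoids} completes the proof.
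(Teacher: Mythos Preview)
Your proof is correct and follows essentially the same approach as the paper: both build a strictly increasing sequence $(r_n)$ in $M^\bullet$ by exploiting the infinitude of $\mathsf{d}(M^\bullet)$ (from Lemma~\ref{lem:denominator set of like-NS PM}) to find at each stage an element whose denominator escapes the finitely generated submonoid $\langle r_1,\dots,r_{n-1}\rangle$, then use the coprime-multiple trick $s \mapsto ks$ with $\gcd(k,\mathsf{d}(s))=1$ to enforce $r_n > r_{n-1}$ without altering the denominator, and finally invoke Proposition~\ref{prop:atoms of increasing monoids}. The only cosmetic difference is that you phrase the escape condition as $\mathsf{d}(s) \nmid L_{n-1}$ while the paper phrases it as $\mathsf{d}(r'_{n+1}) \notin \mathsf{d}(\langle r_1,\dots,r_n\rangle^\bullet)$; these are equivalent.
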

	
	\begin{proof}
		Suppose that $M$ is not isomorphic to any numerical semigroup. Take $r_1 \in M^\bullet$\!\!. By Lemma~\ref{lem:denominator set of like-NS PM}, the set $\mathsf{d}(M^\bullet)$ is not finite. Therefore $\mathsf{d}(\langle r_1 \rangle^\bullet)$ is strictly contained in $\mathsf{d}(M^\bullet)$. Take $r'_2 \in M^\bullet$ such that $\mathsf{d}(r'_2) \notin \mathsf{d}(\langle r_1 \rangle^\bullet)$. Let $r_2$ be the sum of $m_2$ copies of~$r'_2$, where $m_2$ is a natural number so that $\gcd(m_2, \mathsf{d}(r'_2)) = 1$ and $m_2 r'_2 > r_1$. Setting $r_2 = m_2r'_2$, we notice that $r_2 > r_1$ and $r_2 \notin \langle r_1 \rangle$. Now suppose that $r_1, \dots, r_n \in M$ have been already chosen so that $r_{i+1} > r_i$ and $r_{i+1} \notin \langle r_1, \dots, r_i \rangle$ for $i = 1,\dots,n-1$. Once again, be using Lemma~\ref{lem:denominator set of like-NS PM} we can guarantee that $\mathsf{d}(M^\bullet)\setminus \mathsf{d}(\langle r_1, \dots, r_n \rangle^\bullet)$ is not empty. Take $r'_{n+1} \in M^\bullet$ such that $\mathsf{d}(r'_{n+1}) \notin \mathsf{d}(\langle r_1, \dots, r_n \rangle^\bullet)$, and choose $m_{n+1} \in \nn$ so that $\gcd(m_{n+1}, \mathsf{d}(r'_{n+1})) = 1$ and $m_{n+1} r'_{n+1} > r_n$. Taking $r_{n+1} = m_{n+1}r'_{n+1}$, one finds that $r_{n+1} > r_n$ and $r_{n+1} \notin \langle r_1, \dots, r_n \rangle$. Using the method just described, we obtain an infinite sequence $\{r_n\}$ of elements in $M$ satisfying that $r_{n+1} > r_n$ and $r_{n+1} \notin \langle r_1, \dots, r_n \rangle$ for every $n \in \nn$. By Proposition~\ref{prop:atoms of increasing monoids}, the submonoid $N = \langle r_n \mid n \in \nn \rangle$ is atomic and $\mathcal{A}(N) = \{r_n \mid n \in \nn\}$. Hence $M$ has an atomic submonoid with infinitely many atoms, namely, $N$.
		
		Finally, note that conditions (1) and (2) exclude each other; this is because a submonoid of a numerical semigroup is either trivial or isomorphic to a numerical semigroup and so it must contain only finitely many atoms.
	\end{proof}
	
	Now we split the family of increasing Puiseux monoids into two fundamental subfamilies. We will see that these two subfamilies have different behavior. We say that a sequence of rationals is \emph{strongly increasing} if it increases to infinity. On the other hand, a bounded increasing sequence of rationals is called \emph{weakly increasing}.
	
	\begin{definition}
		A Puiseux monoid is said to be \emph{strongly} (resp., \emph{weakly}) \emph{increasing} if it can be generated by a strongly (resp., weakly) increasing sequence.
	\end{definition}
	
	\begin{proposition}
		Every increasing Puiseux monoid is either strongly increasing or weakly increasing. A Puiseux monoid is both strongly and weakly increasing if and only if it is isomorphic to a numerical semigroup.
	\end{proposition}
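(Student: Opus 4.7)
The first assertion is essentially a dichotomy coming from real analysis. Let $M$ be an increasing Puiseux monoid and let $\{r_n\}$ be an increasing sequence of positive rationals generating $M$. Since $\{r_n\}$ is monotone, either it is bounded, in which case it is weakly increasing by definition (so $M$ is weakly increasing), or it is unbounded, in which case, being increasing, it must tend to infinity (so $\{r_n\}$ is strongly increasing and $M$ is strongly increasing). This settles the first assertion.

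For the characterization of numerical semigroups, the ($\Leftarrow$) direction proceeds by explicit construction: if $M$ is isomorphic to a numerical semigroup, then $M$ is atomic with a finite set of atoms $\mathcal{A}(M) = \{a_1 < a_2 < \cdots < a_k\}$; regarded as a bounded finite increasing sequence, this set generates $M$ and exhibits $M$ as weakly increasing. To exhibit $M$ as strongly increasing, I would extend this list to the infinite sequence $a_1, a_2, \ldots, a_k, a_k + a_1, a_k + 2a_1, a_k + 3a_1, \ldots$, which is strictly increasing to infinity, lies entirely in $M$, and still generates $M$ (the first $k$ terms already do).

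For the ($\Rightarrow$) direction, suppose $M$ is both strongly and weakly increasing, with generating sequences $\{s_n\}$ (tending to infinity) and $\{r_n\}$ (bounded), respectively. Since $M$ is increasing, Proposition~\ref{prop:atoms of increasing monoids} guarantees that $M$ is atomic; as $M$ is reduced, every generating set must contain $\mathcal{A}(M)$, whence $\mathcal{A}(M)$ is contained in the set of terms of both $\{s_n\}$ and $\{r_n\}$. The second inclusion forces $\mathcal{A}(M)$ to be bounded above, and combining this with $s_n \to \infty$ (so only finitely many $s_n$ can lie below any given bound) yields that $\mathcal{A}(M)$ is finite. Thus $M$ is finitely generated; clearing denominators and dividing by the gcd of the resulting positive integers exhibits $M$ as isomorphic to a numerical semigroup.

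I do not anticipate any serious obstacle. The only subtlety is the bookkeeping around finite generating sequences — a finite increasing sequence is bounded (hence weakly increasing) but does not ``increase to infinity'' — which is handled cleanly by the explicit infinite extension built in the ($\Leftarrow$) direction.
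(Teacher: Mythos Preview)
Your proposal is correct and follows essentially the same approach as the paper: both use Proposition~\ref{prop:atoms of increasing monoids} together with the inclusion $\mathcal{A}(M)$ into every generating set to force $\mathcal{A}(M)$ finite in the $(\Rightarrow)$ direction, and both construct explicit bounded and unbounded increasing generating sequences for the $(\Leftarrow)$ direction (the paper extends by $k\,r_n$ for $k>n$ rather than your $a_k + j a_1$, a cosmetic difference).
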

	
	\begin{proof}
		The first statement follows straightforwardly. For the second statement, suppose that $M$ is a Puiseux monoid that is both strongly and weakly increasing. By Proposition~\ref{prop:atoms of increasing monoids}, the monoid $M$ is atomic, and its set of atoms can be listed increasingly. Let $\{a_n\}$ be an increasing sequence with underlying set $\mathcal{A}(M)$. Suppose, by way of contradiction, that $\mathcal{A}(M)$ is not finite. Since $M$ is strongly increasing, $\{a_n\}$ must be unbounded. However, the fact that $M$ is weakly decreasing forces $\{a_n\}$ to be bounded, which is a contradiction. Hence $\mathcal{A}(M)$ is finite, which implies that $M$ is isomorphic to a numerical semigroup.
		
		To prove the converse implication, take $M$ to be a Puiseux monoid isomorphic to a numerical semigroup. So $M$ is finitely generated, namely, $M = \langle r_1, \dots, r_n \rangle$ for some $n \in \nn$ and $r_1 < \dots < r_n$. The sequence $\{a_n\}$ defined by $a_k = r_k$ if $k \le n$ and $a_k = kr_n$ if $k > n$ is an unbounded increasing sequence generating $M$. Similarly, the sequence $\{b_n\}$ defined by $b_k = r_k$ if $k \le n$ and $b_k = r_n$ if $k > n$ is a bounded increasing sequence generating $M$. Consequently, $M$ is both strongly and weakly increasing.
	\end{proof}
	
	We will show that the strongly increasing property is hereditary on the class of strongly increasing Puiseux monoids. We will require the following lemma.
	
	\begin{lemma} \label{lem:no limit point implies strongly increasing subset}
		Let $R$ be an infinite subset of $\qq_{\ge 0}$. If $R$ does not contain any limit points, then it is the underlying set of a strongly increasing sequence.
	\end{lemma}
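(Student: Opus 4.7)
The plan is to exploit the fact that in $\rr$, an infinite set with no limit points must be both unbounded and \emph{locally finite}, and then enumerate $R$ in the obvious increasing order. Concretely, I would first observe that for every $N \in \nn$ the intersection $R \cap [0,N]$ is finite: if it were infinite, then by the Bolzano--Weierstrass theorem this bounded infinite subset of $\rr$ would have a limit point, and that limit point would also be a limit point of $R$, contradicting the hypothesis. In particular, since $R$ itself is infinite but $R \subseteq \qq_{\ge 0} = \bigcup_{N \in \nn}[0,N]$, this forces $R$ to be unbounded above.

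Next I would construct the desired sequence. Because $R \cap [0,N]$ is finite for every $N$, we may inductively define $r_1 = \min R$ (which exists, as $R$ is a nonempty subset of $\qq_{\ge 0}$ whose intersection with $[0,1]$ or some larger $[0,N]$ containing an element of $R$ is finite), and then $r_{n+1} = \min\bigl(R \setminus \{r_1, \dots, r_n\}\bigr)$, which is well defined since the complement is still an infinite discrete set bounded below by $0$ and having a finite intersection with any bounded interval. The sequence $\{r_n\}$ so produced is strictly increasing and has underlying set $R$ (every $r \in R$ lies in the finite set $R \cap [0,r]$, hence is reached in finitely many steps).

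Finally, I would verify that $\{r_n\}$ tends to infinity. If it were bounded above, say $r_n \le N$ for all $n$, then the infinitely many distinct $r_n$ would all lie in $R \cap [0,N]$, contradicting the finiteness of that set established above. Hence $\{r_n\}$ is strongly increasing.

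There is no real obstacle here; the only subtle point is recognizing that ``no limit points in $\rr$'' is exactly the hypothesis needed to apply Bolzano--Weierstrass to each bounded slice of $R$, and this yields both local finiteness and unboundedness of $R$ in one stroke.
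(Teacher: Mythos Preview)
Your proof is correct and follows essentially the same route as the paper: both use Bolzano--Weierstrass to show that $R\cap[0,r]$ is finite for every $r$, then recursively define $r_1=\min R$ and $r_{n}=\min\bigl(R\setminus\{r_1,\dots,r_{n-1}\}\bigr)$, and finally conclude that the resulting strictly increasing enumeration of $R$ is unbounded. Your write-up is a bit more explicit about why the enumeration exhausts $R$ and why unboundedness follows, but the argument is the same.
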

	
	\begin{proof}
		For every $r \in R$ and every subset $S$ of $R$, the interval $[0,r]$ must contain only finitely many elements of $S$; otherwise there would be a limit point of $S$ in $[0,r]$. Therefore every nonempty subset of $R$ has a minimum element. So the sequence $\{r_n\}$ recurrently defined by $r_1 = \min R$ and $r_n = \min R \! \setminus \! \{r_1, \dots, r_{n-1}\}$ is strictly increasing and has $R$ as its underlying set. Since $R$ is infinite and contains no limit points, the increasing sequence $\{r_n\}$ must be unbounded. Hence $R$ is the underlying set of the strongly increasing sequence $\{r_n\}$.
	\end{proof}
	
	\begin{theorem} \label{thm:strongly increasing iff super increasing}
		A nontrivial Puiseux monoid $M$ is strongly increasing if and only if every submonoid of $M$ is increasing.
	\end{theorem}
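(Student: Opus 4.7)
\emph{Proof plan.} I would split the theorem into its two implications.

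For the forward direction, I assume $M$ is generated by a sequence $r_1 < r_2 < \cdots$ with $r_n \to \infty$. The crucial observation to establish first is that $M \cap [0, T]$ is finite for every real $T > 0$: any $x \in M$ with $x \le T$ is a sum of at most $T/r_1$ generators, each drawn from the finite set $\{r_n : r_n \le T\}$. Consequently, for any submonoid $N$ of $M$, $N \cap [0, T]$ is also finite, so $0$ is not a limit point of $N$ (making $N$ atomic) and $\mathcal{A}(N)$ has no finite limit point. When $\mathcal{A}(N)$ is finite, I list it increasingly and extend by repetition to produce an increasing generating sequence for $N$; when $\mathcal{A}(N)$ is infinite, I apply Lemma~\ref{lem:no limit point implies strongly increasing subset} to obtain a strongly increasing sequence with underlying set $\mathcal{A}(N)$. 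Either way, $N$ is increasing.

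For the converse, I argue by contrapositive. Since $M$ is a submonoid of itself, the hypothesis that every submonoid of $M$ is increasing forces $M$ itself to be increasing; I assume for contradiction that $M$ is not strongly increasing. By the dichotomy for increasing monoids, $M$ is then weakly but not strongly increasing, and by the characterization of PMs that are both, $M$ is not isomorphic to any numerical semigroup. Hence $\mathcal{A}(M)$ is infinite and bounded; enumerate it as $a_1 < a_2 < \cdots$ with $L := \lim_n a_n \in (0, \infty)$, noting that $a_n < L$ for every $n$.

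I would then consider the submonoid $N = \{0\} \cup \{x \in M : x \ge L\}$ of $M$, which is closed under addition since $x + y \ge 2L \ge L$ for any nonzero $x, y \in N$. The key claim is that every element of $N$ lying in $[L, 2L)$ is an atom of $N$: any decomposition $x = y + z$ with $y, z \in N^\bullet$ forces $y, z \ge L$, hence $x \ge 2L$. For $n$ sufficiently large that $a_1 + a_n \ge L$, the sum $a_1 + a_n$ lies in $[L, 2L)$ and is therefore an atom of $N$; these atoms form an infinite strictly increasing sequence in $[L, a_1 + L)$. For $m$ sufficiently large, $a_2 + a_m$ likewise lies in $[L, 2L)$ and exceeds $a_1 + L$ (since $a_m \to L$ and $a_2 - a_1 > 0$), so $a_2 + a_m$ is an atom of $N$ with infinitely many atoms of $N$ strictly below it. This rules out enumerating $\mathcal{A}(N)$ as an increasing sequence; since the atoms of a reduced monoid must lie in every generating set, $N$ cannot be generated by any increasing sequence, contradicting the hypothesis.

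The main obstacle is the construction of $N$ in the backward direction. The filter $x \ge L$ automatically turns every element of $N$ below $2L$ into an atom, and exploiting the two families $\{a_1 + a_n\}$ and $\{a_2 + a_m\}$ produces the order-type-greater-than-$\omega$ obstruction in $\mathcal{A}(N)$ that blocks any increasing enumeration.
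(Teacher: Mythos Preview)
Your proof is correct. The forward direction matches the paper's argument almost exactly: both establish that $M\cap[0,T]$ is finite for every $T$, and then invoke Lemma~\ref{lem:no limit point implies strongly increasing subset}. (The paper applies the lemma to the whole set $N$ rather than to $\mathcal{A}(N)$, which spares the finite/infinite case split, but the content is the same.)

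Your converse is genuinely different. The paper exploits the fact that the limit $\ell$ of the atoms makes $2\ell$ and $3\ell$ limit points of $M$; it then picks sequences $\{b_n\}\to 2\ell$ and $\{c_n\}\to 3\ell$, each within $\ell/4$ of its limit, and shows that $N=\langle b_n,c_n\rangle$ is atomic with $\mathcal{A}(N)=\{b_n,c_n\}$. This set has two limit points, and an increasing sequence can have at most one, so $N$ is not increasing. Your construction instead takes $N=\{0\}\cup\{x\in M:x\ge L\}$ and observes that every element of $N\cap[L,2L)$ is automatically an atom; the two families $a_1+a_n$ and $a_2+a_m$ then produce an atom with infinitely many atoms strictly below it, an order-type obstruction to any increasing enumeration. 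The paper's approach has the advantage of being reusable verbatim for the decreasing case in Section~\ref{sec:decreasing PM} (a decreasing sequence also has at most one limit point), whereas your filter trick is tailored to the increasing situation. On the other hand, your construction avoids the bookkeeping with the $\ell/4$ inequalities and gives $\mathcal{A}(N)$ for free.
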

	
	\begin{proof}
		If $M$ is finitely generated, then it is isomorphic to a numerical semigroup, and the statement of the theorem follows immediately. So we will assume for the rest of this proof that $M$ is not finitely generated. Suppose that $M$ is strongly increasing. Let us start by verifying that $M$ does not have any real limit points. By Proposition~\ref{prop:atoms of increasing monoids}, the monoid $M$ is atomic. As $M$ is atomic and non-finitely generated, $|\mathcal{A}(M)| = \infty$. Let $\{a_n\}$ be an increasing sequence with underlying set $\mathcal{A}(M)$. Since $M$ is strongly increasing and $\mathcal{A}(M)$ is an infinite subset contained in every generating set of $M$, the sequence $\{a_n\}$ is unbounded. Therefore, for every $r \in \rr$, the interval $[0,r]$ contains only finitely many elements of $\{a_n\}$, say $a_1, \dots, a_k$ for $k \in \nn$. Since $\langle a_1, \dots, a_k \rangle \cap [0,r]$ is a finite set, it follows that $M \cap [0,r]$ is finite as well. Because $|[0,r] \cap M| < \infty$ for all $r \in \rr$, it follows that $M$ does not have any limit points in $\rr$.
		
		Now suppose that $N$ is a nontrivial submonoid of $M$. Notice that, being a subset of $M$, the monoid $N$ cannot have any limit points in $\rr$. Thus, by Lemma~\ref{lem:no limit point implies strongly increasing subset}, the set~$N$ is the underlying set of a strongly increasing sequence of rationals. Hence $N$ is a strongly increasing Puiseux monoid, and the direct implication follows.
		
		For the converse implication, suppose that $M$ is not strongly increasing. We will check that, in this case, $M$ contains a submonoid that is not increasing. If $M$ is not increasing, then $M$ is a submonoid of itself that is not increasing. Suppose, therefore, that $M$ is increasing. By Proposition~\ref{prop:atoms of increasing monoids}, the monoid $M$ is atomic, and we can list its atoms increasingly. Let $\{a_n\}$ be an increasing sequence with underlying set $\mathcal{A}(M)$. Because $M$ is not strongly increasing, there exists a positive real $\ell$ that is the limit of the sequence $\{a_n\}$. Since $\ell$ is a limit point of $M$, which is closed under addition, it follows that $2\ell$ and $3\ell$ are both limit points of $M$. Let $\{b_n\}$ and $\{c_n\}$ be sequences in~$M$ having infinite underlying sets such that $\lim b_n = 2\ell$ and $\lim c_n = 3\ell$. Furthermore, assume that for each $n \in \nn$,
		\begin{equation}
			|b_n - 2\ell| < \frac{\ell}4 \ \text{ and } \ |c_n - 3\ell| < \frac{\ell}4. \label{eq:decreasing is not hereditary}
		\end{equation}
		Take $N$ to be the submonoid of $M$ generated by the set $A := \{b_n,c_n \mid n \in \nn\}$. Note that $A$ contains at least two limit points. Let us verify that $N$ is atomic with $\mathcal{A}(N) = A$. The inequalities \eqref{eq:decreasing is not hereditary} immediately imply that $A$ is bounded from above by $3\ell + \ell/4$. On the other hand, proving that $\mathcal{A}(N) = A$ amounts to showing that the sets $A$ and $A+A$ are disjoint. To verify this, it suffices to note that
		\begin{align*}
			\inf (A + A) &= \inf \big\{b_m + b_n, b_m + c_n, c_m + c_n \mid m,n \in \nn \big\} \\
							  &\ge \min \bigg\{4\ell - \frac{\ell}{2}, \ 5\ell - \frac{\ell}{2}, \ 6\ell - \frac{\ell}{2} \bigg\} \\
							  &> 3\ell + \frac{\ell}{4} \ge \sup A.
		\end{align*}
		Thus, $\mathcal{A}(N) = A$. Since every increasing sequence has at most one limit point in $\rr$, the set $A$ cannot be the underlying set of an increasing rational sequence. As every generating set of $N$ contains $A$, we conclude that $N$ is not an increasing Puiseux monoid, which completes the proof.
	\end{proof}
	
	As a direct consequence of Theorem \ref{thm:strongly increasing iff super increasing}, one obtains the following corollary.
	
	\begin{cor}
		Being atomic, increasing, and strongly increasing are hereditary properties on the class of strongly increasing Puiseux monoids.
	\end{cor}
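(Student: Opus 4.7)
The plan is to derive all three hereditary properties by invoking Theorem~\ref{thm:strongly increasing iff super increasing} (applied twice, once to $M$ and once to a submonoid of $M$), together with Proposition~\ref{prop:atoms of increasing monoids}. Fix a strongly increasing Puiseux monoid $M$ and let $N$ be a submonoid of $M$. If $N$ is trivial, all three properties hold vacuously, so assume $N$ is nontrivial.

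First I would handle heredity of ``increasing'': this is the direct half of the theorem applied to $M$, which says that every submonoid of $M$ is increasing; in particular, $N$ is increasing. Heredity of ``atomic'' is then immediate, because by Proposition~\ref{prop:atoms of increasing monoids} every increasing Puiseux monoid is atomic, so $N$ is atomic.

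The key step is heredity of ``strongly increasing.'' Here I would use a double application of the theorem: let $N'$ be any nontrivial submonoid of $N$. Since $N \subseteq M$, the submonoid $N'$ is also a submonoid of $M$, and the forward direction of Theorem~\ref{thm:strongly increasing iff super increasing} (applied to $M$) guarantees that $N'$ is increasing. Thus every nontrivial submonoid of $N$ is increasing, and now the converse direction of Theorem~\ref{thm:strongly increasing iff super increasing} (applied to $N$) yields that $N$ itself is strongly increasing.

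I do not anticipate any real obstacle here; the only subtlety is remembering that ``every submonoid of $N$ is a submonoid of $M$,'' which is what permits the second application of the theorem. Modulo treating the trivial submonoid separately, the corollary is a clean formal consequence of the earlier results.
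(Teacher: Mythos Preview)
Your argument is correct. The paper does not spell out a proof of this corollary, simply calling it ``a direct consequence of Theorem~\ref{thm:strongly increasing iff super increasing},'' and your derivation is a legitimate way to unpack that phrase.

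There is one small methodological difference worth noting. Inside the proof of Theorem~\ref{thm:strongly increasing iff super increasing}, the forward direction actually establishes the stronger conclusion that every nontrivial submonoid $N$ of a strongly increasing $M$ is itself \emph{strongly} increasing (the proof ends with ``Hence $N$ is a strongly increasing Puiseux monoid''). So the most literal reading of ``direct consequence'' is simply to read off that fact from the proof and then invoke Proposition~\ref{prop:atoms of increasing monoids} for atomicity. Your route instead uses only the \emph{statement} of Theorem~\ref{thm:strongly increasing iff super increasing}, applying it twice: once (forward, to $M$) to see that every submonoid of $N$ is increasing, and once (backward, to $N$) to conclude that $N$ is strongly increasing. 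This is a clean self-contained argument that does not require opening up the earlier proof, and it has the pleasant feature of making the corollary a formal consequence of the theorem as stated rather than of something buried in its proof. Both approaches are equally short; yours is arguably more hygienic.
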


	\medskip
	\section{Decreasing Puiseux Monoids} \label{sec:decreasing PM}
	
	Now that we have explored the structure of increasing Puiseux monoids, we will focus on the study of their decreasing counterpart. If a Puiseux monoid is decreasing, then it is obviously bounded. On the other hand, there are bounded Puiseux monoids that are not even monotone; see Example~\ref{ex:bounded PM that is neither decreasing nor increasing}. However, every strongly bounded Puiseux monoid is decreasing, as we will show in Proposition~\ref{prop:SB PM are strongly decreasing}.
	
	By contrast to the results we obtained in the previous section, the next proposition will show that being decreasing is almost never hereditary. In fact, we prove that being decreasing is hereditary only on those Puiseux monoids that are isomorphic to numerical semigroups.
	
	\begin{lemma} \label{lem:decreasing monoids have infinite limit points}
		If $M$ is a nontrivial decreasing Puiseux monoid, then exactly one of the following conditions holds:
		\begin{enumerate}
			\item $M$ is isomorphic to a numerical semigroup;
			\vspace{3pt}
			\item $M$ contains infinitely many limit points in $\rr$.
		\end{enumerate}
	\end{lemma}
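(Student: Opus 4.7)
The plan is to fix a decreasing sequence $\{r_n\}$ of positive rationals generating $M$ and dichotomize according to whether its underlying set is finite or infinite. If the underlying set is finite, then $M$ is finitely generated, so $\mathsf{d}(M^\bullet)$ is finite (every element is a non-negative integer combination of finitely many rationals sharing a common denominator), and Lemma~\ref{lem:denominator set of like-NS PM} delivers~(1). So I may assume the underlying set is infinite, pass to a strictly decreasing subsequence (still denoted $\{r_n\}$ for simplicity), and set $\ell := \lim_n r_n \in \rr_{\ge 0}$; my goal is then to produce infinitely many limit points of $M$ in $\rr$.

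The two cases are distinguished by $\ell$. If $\ell > 0$, then for each $k \in \nn$ the sequence $\{k r_n\}_n \subseteq M$ is strictly decreasing with limit $k\ell$, exhibiting $k\ell$ as a real limit point of $M$; since the values $k\ell$ are pairwise distinct, $M$ has infinitely many limit points, and (2) holds. If instead $\ell = 0$, then for any fixed $m \in M$ the sequence $\{m + r_n\}_n \subseteq M$ is strictly decreasing with limit $m$, so every element of $M$ is a limit point of $M$; as $M$ is infinite (being nontrivial, it contains $\nn m$ for any $m \in M^\bullet$), (2) again holds. In both sub-cases the strict decrease of the chosen subsequence is what guarantees that the approximating terms are genuinely distinct from the proposed limit.

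Finally, I must show that (1) and (2) cannot both hold. This follows from the isomorphism $x \mapsto ax$ of Lemma~\ref{lem:denominator set of like-NS PM}: if $M$ is isomorphic to a numerical semigroup with $a \in \nn$ the least common multiple of $\mathsf{d}(M^\bullet)$, then $M \subseteq \frac{1}{a}\nn_0$, a discrete subset of $\rr$, and discrete subsets have no real limit points. No substantial obstacle arises; the only mild care needed is the extraction of a strictly decreasing subsequence so that the monotone limiting argument genuinely produces accumulation points (rather than stationary values), and the observation that addition by a fixed monoid element preserves limit points, which is what converts a single limit point into infinitely many.
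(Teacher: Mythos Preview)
Your proof is correct and follows essentially the same approach as the paper's: both arguments extract a strictly decreasing generating sequence, split on whether its limit $\ell$ is positive or zero, and in each case exhibit infinitely many limit points via $\{k r_n\} \to k\ell$ or $\{m + r_n\} \to m$, respectively, with mutual exclusion handled by discreteness of finitely generated Puiseux monoids. The only cosmetic differences are that you dichotomize on the finiteness of the underlying set (rather than directly on whether $M$ is isomorphic to a numerical semigroup) and you note that \emph{every} element of $M$ is a limit point when $\ell = 0$, whereas the paper only records that every generator is.
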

	
	\begin{proof}
		Suppose that $M$ is not isomorphic to a numerical semigroup. Since $M$ is not trivial, it fails to be finitely generated. Therefore it can be generated by a strictly decreasing sequence $\{a_n\}$. The sequence $\{a_n\}$ must converge to a non-negative real number $\ell$. Since $\{k a_n\} \subseteq M$ converges to $k\ell$ for every $k \in \nn$, if $\ell \neq 0$, then every element of the infinite set $\{k\ell \mid k \in \nn\}$ is a limit point of $M$. On the other hand, if $\ell = 0$, then every term of the sequence $\{a_n\}$ is a limit point of $M$; this is because for every fixed $k \in \nn$ the sequence $\{a_k + a_n\} \subseteq M$ converges to $a_k$. Hence $M$ has infinitely many limit points in $\rr$.
		
		Now let us verify that at most one of the above two conditions can hold. For this, assume that $M$ is isomorphic to a numerical semigroup. So $M$ is finitely generated, namely, $M = \langle r_1, \dots, r_n \rangle$, where $n \in \nn$ and $r_i \in \qq_{> 0}$ for $i=1,\dots, n$. For every $r \in \rr$ the interval $[0,r]$ contains only finitely many elements of $M$. Since $M \cap [0,r]$ is finite for all $r \in \rr$, it follows that $M$ cannot have any limit points in the real line.
	\end{proof}
	
	\begin{proposition}
		Let $M$ be a nontrivial decreasing Puiseux monoid. Then exactly one of the following conditions holds:
		\begin{enumerate}
			\item $M$ is isomorphic to a numerical semigroup;
			\vspace{3pt}
			\item $M$ contains a submonoid that is not decreasing.
		\end{enumerate}
	\end{proposition}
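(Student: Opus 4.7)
The plan is to prove the two conditions are mutually exclusive, and then, when (1) fails, to produce a submonoid of $M$ that cannot be generated by any monotone sequence. The structural input is Lemma~\ref{lem:decreasing monoids have infinite limit points}, which guarantees that a nontrivial decreasing Puiseux monoid failing to be isomorphic to a numerical semigroup already has infinitely many real limit points.

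For mutual exclusivity, suppose $M$ is isomorphic to a numerical semigroup. Then, up to the isomorphism, every submonoid of $M$ is a submonoid of $\nn_0$, and every submonoid of $\nn_0$ is finitely generated (after dividing by the gcd of its nonzero elements one obtains a numerical semigroup, which is well known to be finitely generated). Any finitely generated Puiseux monoid is decreasing: list its generators in weakly decreasing order and pad by repeating the smallest generator, exactly as in the sequences constructed at the end of the previous proposition. Hence (1) rules out (2).

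For the main implication, assume $M$ is not isomorphic to a numerical semigroup, so by Lemma~\ref{lem:decreasing monoids have infinite limit points} it has a positive limit point $p$. If $\{m_n\} \subseteq M$ is a sequence of distinct elements with $m_n \to p$, then $\{2m_n\}$ and $\{3m_n\}$, both contained in $M$, witness $2p$ and $3p$ as limit points of $M$ as well. I would then mirror the end of the proof of Theorem~\ref{thm:strongly increasing iff super increasing}: choose sequences $\{b_n\}, \{c_n\}$ of distinct elements of $M$ with $|b_n - 2p| < p/4$ and $|c_n - 3p| < p/4$, set $A = \{b_n, c_n \mid n \in \nn\}$, and form $N = \langle A \rangle$. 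The crux is verifying $\mathcal{A}(N) = A$, which follows from the estimate $2\min A > 7p/2 > 13p/4 \geq \max A$, so that no element of $A$ decomposes nontrivially in $N$. Then $A$ is contained in every generating set of $N$, but $A$ has two distinct real limit points $2p$ and $3p$, whereas any monotone rational sequence has at most one real limit point; hence $N$ cannot be generated by a decreasing sequence, witnessing (2).

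The main obstacle is extracting two limit points $\ell_1 < \ell_2$ of $M$ whose ratio is small enough that atoms clustered near them cannot combine to produce another element of $A$. The choice $\ell_1 = 2p$, $\ell_2 = 3p$ for any positive limit point $p$ gives the comfortable ratio $\ell_2 / \ell_1 = 3/2 < 2$, and the radius $p/4$ keeps the decomposition inequality strict, so a single positive limit point (supplied by Lemma~\ref{lem:decreasing monoids have infinite limit points}) is all the structure one needs.
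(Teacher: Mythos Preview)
Your proposal is correct and follows essentially the same route as the paper: invoke Lemma~\ref{lem:decreasing monoids have infinite limit points} to obtain a positive limit point, pass to the limit points $2p$ and $3p$, reproduce the construction from the proof of Theorem~\ref{thm:strongly increasing iff super increasing} with radius $p/4$ to get an atomic submonoid $N$ whose atom set has two distinct limit points, and conclude that $N$ admits no decreasing generating sequence. Your explicit inequality $2\min A > 7p/2 > 13p/4 \ge \max A$ is exactly the check the paper defers to Theorem~\ref{thm:strongly increasing iff super increasing}, and your treatment of mutual exclusivity matches the paper's as well.
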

	
	\begin{proof}
		Suppose that $M$ is not isomorphic to a numerical semigroup. Let us construct a submonoid of $M$ that fails to be decreasing. Lemma~\ref{lem:decreasing monoids have infinite limit points} implies that $M$ has a nonzero limit point $\ell$. Since $M$ is closed under addition, $2\ell$ and $3\ell$ are both limit points of $M$. An argument as the one given in the proof of Theorem~\ref{thm:strongly increasing iff super increasing} will guarantee the existence of sequences $\{a_n\}$ and $\{b_n\}$ in $M$ having infinite underlying sets such that $\{a_n\}$ converges to $2\ell$, $\{b_n\}$ converges to $3\ell$, and the submonoid $N = \langle a_n, b_n \mid n \in \nn \rangle$ of $M$ is atomic with $\mathcal{A}(M) = \{a_n,b_n \mid n \in \nn\}$. Since every decreasing sequence of $\qq$ contains at most one limit point, $\mathcal{A}(M)$ cannot be the underlying set of a decreasing sequence of rationals. As every generating set of $N$ must contain $\mathcal{A}(M)$, we can conclude that $N$ is not decreasing. Hence at least one of the given conditions must hold.
		
		To see that both conditions cannot hold simultaneously, it suffices to observe that if~$M$ is isomorphic to a numerical semigroup, then every nontrivial submonoid of~$M$ is also isomorphic to a numerical semigroup and, therefore, decreasing.
	\end{proof}
	
	Similarly, as we did in the case of increasing Puiseux monoids, we will split the family of decreasing Puiseux monoids into two fundamental subfamilies, depending on whether $0$ is or is not a limit point. We say that a non-negative sequence of rationals is \emph{strongly decreasing} if it is decreasing and it converges to zero. A non-negative decreasing sequence of rationals converging to a positive real is called \emph{weakly decreasing}.
	
	\begin{definition}
		A Puiseux monoid is \emph{strongly decreasing} if it can be generated by a strongly decreasing sequence of rational numbers. On the other hand, a Puiseux monoid is said to be \emph{weakly decreasing} if it can be generated by a weakly decreasing sequence of rationals.
	\end{definition}
	
	Observe that if a Puiseux monoid $M$ is weakly decreasing, then it has a generating sequence decreasing to a positive real number and, therefore, $0$ is not in the closure of~$M$. As a consequence, every weakly decreasing Puiseux monoid must be atomic. The next proposition describes those Puiseux monoids that are both strongly and weakly decreasing.
	
	\begin{proposition}
		A decreasing Puiseux monoid is either strongly or weakly decreasing. A Puiseux monoid is both strongly and weakly decreasing if and only if it is isomorphic to a numerical semigroup.
	\end{proposition}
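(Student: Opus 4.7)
The plan is to handle the dichotomy using monotone convergence and the coincidence case via the paper's observation that weakly decreasing forces $0$ to be bounded away from $M^\bullet$.

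For the first statement, I would take any decreasing generating sequence $\{r_n\}$ of $M$. Since $\{r_n\}$ is bounded below by $0$, the monotone convergence theorem gives $r_n \to \ell$ for some $\ell \in \rr_{\ge 0}$. If $\ell = 0$ then $\{r_n\}$ is strongly decreasing by definition, and if $\ell > 0$ then $\{r_n\}$ is weakly decreasing, so $M$ falls into at least one of the two classes.

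For the backward direction of the characterization, if $M$ is isomorphic to a numerical semigroup then $M$ is finitely generated, say $M = \langle r_1, \dots, r_n \rangle$ with $r_1 > r_2 > \cdots > r_n > 0$. The sequence $\{a_k\}$ defined by $a_k = r_k$ for $k \le n$ and $a_k = 0$ for $k > n$ is a decreasing rational sequence converging to $0$ whose underlying set generates $M$, so $M$ is strongly decreasing. Similarly, $\{b_k\}$ defined by $b_k = r_k$ for $k \le n$ and $b_k = r_n$ for $k > n$ is a decreasing rational sequence converging to $r_n > 0$ and generating $M$, so $M$ is weakly decreasing. For the forward direction, assume $M$ is both strongly and weakly decreasing. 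Fix a weakly decreasing generating sequence $\{b_n\}$ converging to some $\ell > 0$; since $\{b_n\}$ is decreasing we have $b_n \ge \ell$ for all $n$, and because every element of $M^\bullet$ is a positive integer combination of the $b_n$'s, we conclude $M^\bullet \subseteq [\ell, \infty)$, so $0$ is not a limit point of $M$. Now let $\{a_n\}$ be a strongly decreasing generating sequence; since $a_n \to 0$ but $M \cap (0, \ell) = \emptyset$, only finitely many $a_n$ can be positive, and hence $M$ is finitely generated. Therefore $\mathsf{d}(M^\bullet)$ is finite, and Lemma~\ref{lem:denominator set of like-NS PM} yields an isomorphism of $M$ with a numerical semigroup.

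The only subtle point is the convention that ``decreasing'' is understood non-strictly, which is what allows a finite-support sequence padded with zeros to count as strongly decreasing in the backward direction; without this reading the statement would fail, but since the paper's terminology matches the analogous increasing case, the convention is clear. The rest of the argument is an elementary consequence of the observation preceding the proposition and Lemma~\ref{lem:denominator set of like-NS PM}.
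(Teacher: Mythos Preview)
Your argument is correct and mirrors the paper's proof almost exactly: both use that a weakly decreasing generating set keeps $0$ out of the closure of $M$, then observe that a strongly decreasing generating sequence must therefore have only finitely many nonzero terms, forcing $M$ to be finitely generated and hence isomorphic to a numerical semigroup. Your treatment is simply more explicit in places (spelling out the monotone convergence for the first claim and the padded sequences for the converse), and invoking Lemma~\ref{lem:denominator set of like-NS PM} at the end is an acceptable way to conclude where the paper appeals directly to finite generation.
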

	
	\begin{proof}
		As in the case of increasing Puiseux monoids, the first statement follows immediately. Now suppose that $M$ is a Puiseux monoid that is both strongly and weakly decreasing. Since $M$ is weakly decreasing, $0$ is not a limit point of $M^\bullet \!$. Let $\{a_n\}$ be a sequence decreasing to zero such that $M = \langle a_n \mid n \in \nn \rangle$. Because $0$ is not a limit point of $M^\bullet$, there exists $n_0 \in \nn$ such that $a_n = 0$ for all $n \ge n_0$. Hence $M$ is isomorphic to a numerical semigroup. As in the increasing case, it is easily seen that every numerical semigroup is both strongly and weakly decreasing.
	\end{proof}
	
	We mentioned at the beginning of this section that every strongly bounded Puiseux monoid is decreasing. Indeed, a stronger statement holds. 
	
	\begin{proposition} \label{prop:SB PM are strongly decreasing}
		Every strongly bounded Puiseux monoid is strongly decreasing.
	\end{proposition}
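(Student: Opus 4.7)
The plan is to exploit the finiteness of $\mathsf{n}(R)$ to show that any strongly bounded generating set already has $0$ as its unique possible limit point, and then reorder it into a decreasing sequence converging to $0$. So let $M$ be strongly bounded, and fix a generating set $R \subseteq \qq_{>0}$ of $M$ with $\mathsf{n}(R)$ bounded; since $\mathsf{n}(R) \subseteq \nn$, this set is in fact finite, and we may let $N \in \nn$ be an upper bound for it.

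The first and central step is a finite-fibers bound on $R$. For any $\epsilon > 0$, any $r \in R$ with $r \geq \epsilon$ satisfies $\mathsf{d}(r) \leq \mathsf{n}(r)/\epsilon \leq N/\epsilon$, so the pair $(\mathsf{n}(r), \mathsf{d}(r))$ is confined to the finite set $\mathsf{n}(R) \times \{1, \dots, \lfloor N/\epsilon \rfloor\}$. Hence $R \cap [\epsilon, \infty)$ is finite for every $\epsilon > 0$, and therefore the only possible limit point of $R$ in $[0, \infty)$ is $0$.

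The second step is the enumeration. If $R$ is finite, then $M$ is finitely generated, so by Lemma~\ref{lem:denominator set of like-NS PM} $M$ is isomorphic to a numerical semigroup, and the preceding proposition makes $M$ strongly decreasing. If $R$ is infinite, the finite-fibers bound guarantees $\max R$ exists; recursively setting $r_1 = \max R$ and $r_{n+1} = \max(R \setminus \{r_1, \dots, r_n\})$ produces a strictly decreasing enumeration $\{r_n\}$ of $R$. Since $0$ is the only possible limit point of $R$, one has $r_n \to 0$, so $\{r_n\}$ is a strongly decreasing sequence generating $M$.

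The main (and essentially only) obstacle is the finite-fibers estimate, which is where the strong boundedness hypothesis is actually used; plain boundedness of $R$ would not suffice, as Example~\ref{ex:bounded PM that is neither decreasing nor increasing} shows. Once this estimate is in hand, the construction of the required decreasing sequence is automatic, and the finite case is dispatched by a one-line appeal to Lemma~\ref{lem:denominator set of like-NS PM} and the previous proposition.
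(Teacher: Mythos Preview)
Your argument is correct, but the paper takes a different and somewhat slicker route. Rather than analyzing $R$ directly via the finite-fibers estimate, the paper lets $m$ be the least common multiple of $\mathsf{n}(S)$ and observes that $x \mapsto x/m$ is an order-preserving isomorphism from $M$ onto $M' = \frac{1}{m}M$; the generating set $S' = \frac{1}{m}S$ then satisfies $\mathsf{n}(S') = \{1\}$, so $S'$ consists entirely of unit fractions and is therefore automatically the underlying set of a strongly decreasing sequence. The paper's normalization trick compresses your finite-fibers count and case split into a single algebraic reduction, at the cost of leaving implicit precisely the estimate you make explicit (that a set of unit fractions meets each $[\epsilon,\infty)$ in a finite set). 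Your approach, by contrast, is more elementary and self-contained: it isolates exactly where strong boundedness is used and dispatches the finitely generated case cleanly via Lemma~\ref{lem:denominator set of like-NS PM} and the preceding proposition, rather than folding it into the general enumeration.
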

	
	\begin{proof}
		Let $M$ be a strongly bounded Puiseux monoid. Since the trivial monoid is both strongly bounded and strongly decreasing, for this proof we will assume that $M \neq \{0\}$. Let $S \subset\qq_{> 0}$ be a generating set of $M$ such that $\mathsf{n}(S)$ is bounded. Since $\mathsf{n}(S)$ is finite, we can take $m$ to be the least common multiple of the elements of $\mathsf{n}(S)$. The map $x \mapsto \frac{1}{m}x$ is an order-preserving isomorphism from $M$ to $M' = \frac{1}{m}M$. Consequently, $M'$ is strongly decreasing if and only if $M$ is strongly decreasing. In addition, $S' = \frac{1}{m}S$ generates $M'$. Since $\mathsf{n}(S') = \{1\}$, it follows that $S'$ is the underlying set of a strongly decreasing sequence of rationals. Hence $M'$ is a strongly decreasing Puiseux monoid, which implies that $M$ is strongly decreasing as well.
	\end{proof}
	
	Recall that a Puiseux monoid $M$ is finite if $\pval(\mathsf{d}(M^\bullet)) = \{0\}$ for all but finitely many primes $p$. Strongly decreasing Puiseux monoids are not always strongly bounded, even if we require them to be finite. For example, if $r \in \qq$ such that $0 < r < 1$ and both $\mathsf{n}(r)$ and $\mathsf{d}(r)$ are different from $1$, then the Puiseux monoid $M_r = \langle r^n \mid n \in \nn \rangle$ is atomic and $\mathcal{A}(M_r) = \{r^n \mid n \in \nn\}$ (this will be proved in Theorem~\ref{thm:atomic classification of multiplicative cyclic Puiseux monoids}). As a result, $M_r$ is finite and strongly decreasing. However, $M_r$ fails to be strongly bounded. On the other hand, not every bounded Puiseux monoid is decreasing, as illustrated in Example~\ref{ex:bounded PM that is neither decreasing nor increasing}.
	
	Because numerical semigroups are finitely generated, they are both increasing and decreasing Puiseux monoids. We end this section showing that numerical semigroups are the only such Puiseux monoids.
	
	\begin{proposition}
		A nontrivial Puiseux monoid $M$ is isomorphic to a numerical semigroup if and only if $M$ is both increasing and decreasing.
	\end{proposition}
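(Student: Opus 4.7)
The plan is to prove both directions, with the forward direction being nearly immediate and the real work lying in the converse.

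For the forward direction, if $M$ is isomorphic to a numerical semigroup, then $M$ is finitely generated, so I would write $M = \langle r_1, \dots, r_n \rangle$ with $r_1 < \dots < r_n$ and then, imitating the padding trick used in the earlier proposition on strongly versus weakly increasing monoids, take $a_k = r_k$ for $k \le n$ and $a_k = r_n$ for $k > n$ to obtain an increasing generating sequence; reversing the indices and padding with $r_1$ gives a decreasing generating sequence.

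For the converse, suppose $M$ is both increasing and decreasing. Proposition~\ref{prop:atoms of increasing monoids} then yields that $M$ is atomic, so it suffices to verify that $\mathcal{A}(M)$ is finite: once this is done, $M$ is finitely generated, $\mathsf{d}(M^\bullet)$ is finite, and Lemma~\ref{lem:denominator set of like-NS PM} delivers that $M$ is isomorphic to a numerical semigroup. To show $\mathcal{A}(M)$ is finite I will use that, by reducedness, $\mathcal{A}(M)$ is contained in every generating set of $M$. Letting $\{r_n\}$ and $\{s_n\}$ be an increasing and a decreasing generating sequence for $M$ respectively, the subsequence of $\{s_n\}$ formed by the terms lying in $\mathcal{A}(M)$ is a (non-strictly) decreasing sequence whose underlying set is $\mathcal{A}(M)$; its first term is therefore $\max \mathcal{A}(M)$. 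If, toward a contradiction, $\mathcal{A}(M)$ were infinite, then the analogous subsequence of $\{r_n\}$, with repetitions removed, would be a strictly increasing infinite sequence whose underlying set is still $\mathcal{A}(M)$, and such a sequence cannot possess a largest element. This contradicts the existence of $\max \mathcal{A}(M)$, so $\mathcal{A}(M)$ must be finite.

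I do not anticipate a genuine obstacle: the converse hinges on the single observation that an infinite set of reals cannot simultaneously serve as the underlying set of a non-strict decreasing sequence and of a strictly increasing sequence, since the former supplies a maximum while the latter forbids one. The only care required is the bookkeeping of extracting monotone subsequences from the given generating sequences and moving cleanly between strict and non-strict monotonicity.
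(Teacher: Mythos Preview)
Your proposal is correct and follows essentially the same approach as the paper: both argue that the decreasing generating sequence forces $\mathcal{A}(M)$ to have a maximum (namely the first atom appearing in it), while an infinite $\mathcal{A}(M)$ sitting inside the increasing generating sequence would have no largest element, yielding the desired contradiction. The only cosmetic difference is that you route the final step through Lemma~\ref{lem:denominator set of like-NS PM} after observing $\mathsf{d}(M^\bullet)$ is finite, whereas the paper simply notes that an atomic monoid with finitely many atoms is finitely generated and hence isomorphic to a numerical semigroup.
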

	
	\begin{proof}
		If $M$ is isomorphic to a numerical semigroup, then it is finitely generated and, consequently, increasing and decreasing.
		
		Conversely, suppose that $M$ is a nontrivial Puiseux monoid that is increasing and decreasing. Proposition~\ref{prop:atoms of increasing monoids} implies that $M$ is atomic and, moreover, $\mathcal{A}(M)$ is the underlying set of an increasing sequence (because $\mathcal{A}(M) \neq \emptyset$). Suppose, by way of contradiction, that $\mathcal{A}(M)$ is not finite. In this case, $\mathcal{A}(M)$ does not contain a largest element. Since $M$ is decreasing, there exists $D = \{d_n \mid n \in \nn \} \subset \qq_{> 0}$ such that $d_1 > d_2 > \cdots$ and $M = \langle D \rangle$. Let $m = \min \{n \in \nn \mid d_n \in \mathcal{A}(M)\}$, which must exist because $\mathcal{A}(M) \subseteq D$. Since $\mathcal{A}(M)$ is contained in $D$, the minimality of $m$ implies that~$d_m$ is the largest element of $\mathcal{A}(M)$, which is a contradiction. Hence $\mathcal{A}(M)$ is finite. Since $M$ is atomic and $\mathcal{A}(M)$ is finite, $M$ is isomorphic to a numerical semigroup.
	\end{proof}

	\medskip
	\section{Prime Reciprocal Puiseux Monoids} \label{sec:primary Puiseux Monoids}
	
	In this section, we take a step further in our search of non-finitely generated atomic Puiseux monoids. We investigate the atomic structure of submonoids of those Puiseux monoids that can be generated by reciprocals of primes. Observe that such Puiseux monoids form a special subclass of strongly decreasing Puiseux monoids.
	
	\begin{definition}
		A Puiseux monoid $M$ is said to be \emph{prime reciprocal}\footnote{In the original paper, as published in Semigroup Forum, instead of ``prime reciprocal" the term``primary" was used. However, we have decided to use the former term in this updated arXiv version as the term ``primary" generates some conflict of names with a different class of monoids already studied under the same name.} if there exists a set $P$ of primes such that $M = \langle 1/p \mid p \in P \rangle$.
	\end{definition}
	
	Let $M$ be a Puiseux monoid, and let $N$ be a submonoid of $M$. If $M$ is finitely generated, then $N$ is also finitely generated. Thus, being finitely generated is hereditary on the class of finitely generated Puiseux monoids. As we should expect, not every property of a Puiseux monoid is inherited by its submonoids. For example, being antimatter is not hereditary on the class of antimatter Puiseux monoids; for instance, $\qq_{\ge 0}$ is antimatter because every positive rational can be expressed as the addition of two positive rationals, but it contains the atomic submonoid $\nn_0$, which satisfies $\mathcal{A}(\nn_0) = \{1\}$. Moreover, as Corollary~\ref{cor:strongly bounded is not hereditary on primary PM} indicates, boundedness and strong boundedness are not hereditary, even on the class of prime reciprocal Puiseux monoids.
	
	Let $S$ be a set of naturals. If the series $\sum_{s \in S} 1/s$ diverges, $S$  is said to be \emph{substantial}. If $S$ is not substantial, it is said to be \emph{insubstantial} (see~\cite{pC}). For example, it is well known that the set of prime numbers is substantial as it was first noticed by Euler that the series of reciprocal primes is divergent.
	
	\begin{proposition} \label{prop:strongly bounded PM having a submonoid that is not even bounded}
		Let $P$ be a set of primes, and let $M$ be the prime reciprocal Puiseux monoid $\langle 1/p \mid p \in P \rangle$. If every submonoid of $M$ is bounded, then $P$ is insubstantial.
	\end{proposition}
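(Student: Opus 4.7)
The plan is to prove the contrapositive: assume $P$ is substantial (i.e., $\sum_{p\in P} 1/p = \infty$) and construct an unbounded submonoid of $M$. The idea is to partition a tail of $P$ into pairwise disjoint finite blocks $P_n$ whose reciprocal sums $s_n := \sum_{p\in P_n} 1/p$ tend to infinity, and to show that $N := \langle s_n \mid n \in \nn\rangle$ is an unbounded submonoid of $M$. Enumerate $P = \{p_1 < p_2 < \cdots\}$; by divergence, I would recursively pick indices $0 = k_0 < k_1 < k_2 < \cdots$ so that $s_n := \sum_{i=k_{n-1}+1}^{k_n} 1/p_i > n$ (each tail of a divergent series still diverges), and set $P_n := \{p_{k_{n-1}+1}, \dots, p_{k_n}\}$. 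Then the $P_n$ are pairwise disjoint finite subsets of $P$, each $s_n$ lies in $M$, and $s_n \to \infty$.

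To conclude that $N$ is unbounded, it suffices to verify that each $s_n$ is an atom of $N$, since in a reduced monoid every atom must belong to every generating set (as noted in Section~\ref{sec:Background and Notation}), so an unbounded set of atoms forces every generating set of $N$ to be unbounded. For this, fix $n$ and pick any $p \in P_n$. Because the primes in $P_n$ are distinct, the $p$-adic valuation gives $\pval(s_n) = -1$ (the summand $1/p$ contributes valuation $-1$ while each remaining $1/q$ has valuation $0$); whereas for every $m \neq n$ the disjointness $P_m \cap P_n = \emptyset$ ensures $p \notin P_m$, so $\pval(s_m) \geq 0$. Now suppose $s_n = \sum_{m \in F} c_m s_m$ with $F \subset \nn$ finite and $c_m \in \nn_0$. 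If $c_n = 0$, the right-hand side has nonnegative $p$-adic valuation, contradicting $\pval(s_n) = -1$. Thus $c_n \geq 1$, and rearranging yields $(1 - c_n)s_n = \sum_{m \neq n, m \in F} c_m s_m \geq 0$; since $s_n > 0$, this forces $c_n = 1$ and then $c_m = 0$ for every $m \neq n$. Hence $s_n$ admits no nontrivial decomposition in $N$, so $s_n \in \mathcal{A}(N)$.

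The main obstacle is arranging the blocks $P_n$ to be pairwise disjoint; without this, the $p$-adic valuation argument that isolates $s_n$ from the other $s_m$ breaks down, since a prime used in $P_n$ could also appear in some $P_m$ and erase the valuation gap. The recursive choice of the $k_n$ using the divergence of $\sum_{p\in P} 1/p$ makes disjointness automatic and simultaneously produces the growth $s_n > n$; the remainder is a short combination of the valuation estimate (ruling out $c_n = 0$) with positivity of the $s_m$ (ruling out $c_n \geq 2$).
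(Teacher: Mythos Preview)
Your proof is correct. Both you and the paper argue by contraposition and build an unbounded submonoid of $M$ generated by finite sums of reciprocals of primes, then show those generators are atoms so that every generating set must be unbounded. The constructions differ, however. The paper takes the \emph{nested} partial sums $a_n=\sum_{i=1}^n 1/p_i$ and, to prove each $a_n$ is an atom, exploits that $\{a_n\}$ is increasing together with a divisibility check modulo $p_n$ after clearing denominators (then it invokes Proposition~\ref{prop:atoms of increasing monoids}). You instead split $P$ into \emph{disjoint} finite blocks $P_n$ with $s_n=\sum_{p\in P_n}1/p>n$, so that for any $p\in P_n$ one has $\pval(s_n)=-1$ while $\pval(s_m)\ge 0$ for $m\neq n$; this valuation gap plus positivity immediately forces the decomposition of $s_n$ to be trivial. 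Your disjoint-block choice makes the atom verification cleaner and self-contained (no appeal to Proposition~\ref{prop:atoms of increasing monoids}), at the modest cost of the recursive construction of the blocks; the paper's nested sums are the most natural generators but require a slightly more hands-on divisibility argument.
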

		
	\begin{proof}
		Suppose, by way of contradiction, that $P$ is substantial. Then $P$ must contain infinitely many primes. Let $\{p_n\}$ be a strictly increasing enumeration of the elements in $P$. Take $N$ to be the submonoid of $M$ generated by $A = \{a_n \mid n \in \nn\}$, where
		\[
			a_n = \sum_{i=1}^n \frac{1}{p_i}.
		\]
		Since $P$ is substantial, $A$ is unbounded. We will show that $N$ fails to be bounded. For this purpose, we verify that $\mathcal{A}(N) = A$, which implies that every generating set of $N$ contains $A$ and, therefore, must be unbounded. Suppose that
		\begin{equation} \label{eq:boundedness in not hereditary 1}
			a_n = a_{n_1} + \dots + a_{n_{\ell}}
		\end{equation}
		for some $\ell, n, n_1, \dots, n_\ell \in \nn$ such that $n_1 \le \dots \le n_\ell$. Since $\{a_n\}$ is an increasing sequence, $n \ge n_\ell$. After multiplying the equation \eqref{eq:boundedness in not hereditary 1} by $m = p_1 \dots p_n$ and moving every summand but $m/p_n$ to the right-hand side, we obtain
		\begin{equation} \label{eq:boundedness in not hereditary 2}
			p_1 \cdots p_{n-1} = \sum_{j=1}^\ell \sum_{i=1}^{n_j} \frac{m}{p_i} - \sum_{i=1}^{n-1} \frac{m}{p_i}.
		\end{equation}
		Now we observe that if $n$ were strictly greater than $n_\ell$, then $m/p_i$ would be an integer divisible by $p_n$ for each $i = 1, \dots, n_j$ and $j = 1, \dots, \ell$, which would imply that the right-hand side of \eqref{eq:boundedness in not hereditary 2} is divisible by $p_n$. This cannot be possible because $p_1 \cdots p_{n-1}$ is not divisible by $p_n$. Thus, $n = n_\ell$ and so $\ell = 1$. Since $\{a_n\}$ is an increasing sequence satisfying that $a_n \notin \langle a_1, \dots, a_{n-1} \rangle$, Proposition~\ref{prop:atoms of increasing monoids} ensures that $\mathcal{A}(N) = A$. As a result, $M$ contains a submonoid that fails to be bounded; but this is a contradiction. Hence the set $P$ is insubstantial.
	\end{proof}
	
	For $m,n \in \nn_0$ such that $n > 0$ and $\gcd(m,n) = 1$, Dirichlet's theorem states that the set $P$ of all primes $p$ satisfying that $p \equiv m \pmod n$ is infinite. For a relatively elementary proof of Dirichlet's theorem, see \cite{hS50}. Furthermore, it is also known that the set $P$ is substantial; indeed, as indicated in \cite[page 156]{tA76}, there exists a constant~$A$ for which
	\begin{equation} \label{eq:prime in arithmetic progression are substantial}
		\sum_{p \in P, p \le x} \frac{1}{p} = \frac{1}{\varphi(n)} \log \log x + A + O\bigg(\frac{1}{\log x}\bigg),
	\end{equation}
	where $\varphi$ is the Euler totient function. In particular, the set comprising all primes of the form $4k+1$ (or $4k+ 3$) is substantial. The next corollary follows immediately from Proposition~\ref{prop:strongly bounded PM having a submonoid that is not even bounded} and equation~\eqref{eq:prime in arithmetic progression are substantial}.
	
	\begin{cor} \label{cor:strongly bounded is not hereditary on primary PM}
		 Let $m,n \in \nn_0$ such that $n > 0$ and $\gcd(m,n) = 1$, and let $P$ be the set of all primes $p$ satisfying $p \equiv m \pmod n$. Then the prime reciprocal Puiseux monoid $M = \langle 1/p \mid p \in P \rangle$ contains an unbounded submonoid.
	\end{cor}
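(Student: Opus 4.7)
The plan is to deduce this corollary directly from Proposition~\ref{prop:strongly bounded PM having a submonoid that is not even bounded} by taking its contrapositive. That proposition says: if every submonoid of a primary Puiseux monoid $\langle 1/p \mid p \in P \rangle$ is bounded, then the set $P$ of primes is insubstantial. Equivalently, whenever $P$ is substantial, the primary Puiseux monoid $\langle 1/p \mid p \in P \rangle$ must contain at least one unbounded submonoid, which is precisely what the corollary asserts.

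Therefore, the only thing left to verify is that the set $P$ of all primes congruent to $m$ modulo $n$ (with $\gcd(m,n)=1$) is substantial. First, I would invoke Dirichlet's theorem on primes in arithmetic progressions to ensure that $P$ is nonempty (indeed infinite); this is needed merely to make the series $\sum_{p \in P} 1/p$ a sum over infinitely many terms. Then I would appeal to the asymptotic formula \eqref{eq:prime in arithmetic progression are substantial} recorded just before the corollary: since
\[
    \sum_{\substack{p \in P \\ p \le x}} \frac{1}{p} \; = \; \frac{1}{\varphi(n)} \log \log x + A + O\!\left(\frac{1}{\log x}\right),
\]
and $\log \log x \to \infty$ as $x \to \infty$ while the error term remains bounded, the partial sums on the left-hand side tend to infinity. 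This shows that $\sum_{p \in P} 1/p$ diverges and so $P$ is substantial.

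Having checked that $P$ is substantial, the contrapositive of Proposition~\ref{prop:strongly bounded PM having a submonoid that is not even bounded} yields an unbounded submonoid of $M = \langle 1/p \mid p \in P \rangle$, completing the proof. There is no genuine obstacle here: all of the technical work sits inside the preceding proposition (the construction of the submonoid $N = \langle a_n \mid n \in \nn\rangle$ with $a_n = \sum_{i=1}^n 1/p_i$ and the verification that $\mathcal{A}(N) = \{a_n\}$), and inside the analytic estimate \eqref{eq:prime in arithmetic progression are substantial} which is quoted from \cite{tA76}. The corollary is a clean assembly of these two ingredients.
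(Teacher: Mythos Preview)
Your proposal is correct and follows exactly the approach the paper indicates: the corollary is stated as an immediate consequence of Proposition~\ref{prop:strongly bounded PM having a submonoid that is not even bounded} together with the asymptotic formula~\eqref{eq:prime in arithmetic progression are substantial}, and you have simply spelled out that deduction in full.
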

	
	We say that a monoid $M$ is \emph{hereditarily atomic} if each submonoid of $M$ is atomic, i.e., being atomic is an hereditary property on $M$. Numerical semigroups and strongly increasing Puiseux monoids are hereditarily atomic. More generally, if $M$ is a Puiseux monoid not having $0$ as a limit point, then no submonoid of $M$ has $0$ as a limit point and, as a consequence, $M$ is hereditarily atomic. According to Theorem~\ref{thm:super atomicity of a prime-generated Puiseux monoid}, every prime reciprocal Puiseux monoid is hereditarily atomic.
	
	Let $P$ be a set of primes, and let $r \in \qq_{> 0}$. We denote by $\mathsf{D}_P(r)$ the set of primes $p \in P$ dividing $\mathsf{d}(r)$. Besides, if $R \subseteq \qq_{> 0}$, then we set $\mathsf{D}_P(R) = \cup_{r \in R} \mathsf{D}_P(r)$. The following lemma is used in the proof of Theorem~\ref{thm:super atomicity of a prime-generated Puiseux monoid}.
	
	\begin{lemma} \label{lem:properties of D}
		Let $P$ be a set of primes and, for $n \in \nn$, let $r, r_1, \dots, r_n$ be positive rationals such that $r = r_1 + \dots + r_n$. Then $\mathsf{D}_P(r) \subseteq \mathsf{D}_P(r_1) \cup \dots \cup \mathsf{D}_P(r_n)$.
	\end{lemma}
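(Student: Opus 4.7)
The plan is to reduce the claim to a statement about $p$-adic valuations. For a fixed prime $p$, I extend $\pval$ from $\zz$ to $\qq$ by $\pval(a/b) = \pval(a) - \pval(b)$. Then for any $s \in \qq_{>0}$ written in lowest terms, $p \mid \mathsf{d}(s)$ is equivalent to $\pval(s) < 0$, so $\mathsf{D}_P(s) = \{p \in P \mid \pval(s) < 0\}$. Under this reformulation, the lemma becomes: if $p \in P$ satisfies $\pval(r) < 0$, then $\pval(r_i) < 0$ for at least one index $i$.

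I would prove the contrapositive: if $\pval(r_i) \ge 0$ for every $i \in \{1, \dots, n\}$, then $\pval(r) \ge 0$. This is a one-line induction on $n$ from the non-archimedean inequality $\pval(x+y) \ge \min(\pval(x),\pval(y))$, valid throughout $\qq$, which yields
\[
\pval(r) = \pval(r_1 + \cdots + r_n) \ge \min_{1 \le i \le n} \pval(r_i) \ge 0,
\]
as required. There is no real obstacle here: the entire content is the ultrametric estimate, and the hypothesis $p \in P$ plays no active role beyond ensuring that the sets $\mathsf{D}_P$ are well-defined. An equally quick alternative would be to write each $r_i$ in lowest terms as $a_i/b_i$, put the sum over the common denominator $B = b_1 \cdots b_n$, and observe that $\mathsf{d}(r) \mid B$, so every prime divisor of $\mathsf{d}(r)$ must divide some $b_i = \mathsf{d}(r_i)$.
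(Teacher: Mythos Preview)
Your proof is correct. Your primary argument via the ultrametric inequality $\pval(x+y)\ge\min(\pval(x),\pval(y))$ is a genuinely different packaging from the paper's proof: the paper takes $p\in\mathsf{D}_P(r)$, clears denominators by multiplying $r=r_1+\dots+r_n$ by $d=\mathsf{d}(r)\mathsf{d}(r_1)\cdots\mathsf{d}(r_n)$, observes that $p$ divides every summand $m_i\,\mathsf{n}(r_i)$ on the right (because $p\mid\mathsf{d}(r)\mid m_i$), and hence divides the left side $\mathsf{d}(r_1)\cdots\mathsf{d}(r_n)\,\mathsf{n}(r)$; since $p\nmid\mathsf{n}(r)$, this forces $p\mid\mathsf{d}(r_i)$ for some $i$. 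Your alternative sketch (put everything over $B=b_1\cdots b_n$ and note $\mathsf{d}(r)\mid B$) is essentially this same argument. The valuation route is slightly slicker and, since the paper already uses $\pval$ later in Section~\ref{sec:Multiplicative Cyclic Puiseux Monoids}, would be entirely in keeping with the tools at hand; the paper's version is more elementary in that it avoids invoking valuation machinery explicitly. Either way, the mathematical content is the same trivial observation about prime divisors of a sum of fractions.
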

	
	\begin{proof}
		Take $p \in \mathsf{D}_P(r)$. Then $p$ is a prime in $P$ dividing $\mathsf{d}(r)$. Multiplying the equation $r = r_1 + \dots + r_n$ by $d = \mathsf{d}(r)\mathsf{d}(r_1) \cdots \mathsf{d}(r_n)$, we get
		\begin{equation} \label{eq:lemma properties of D}
			\mathsf{d}(r_1) \cdots \mathsf{d}(r_n) \mathsf{n}(r) = \sum_{i=1}^n m_i \mathsf{n}(r_i),
		\end{equation}
		where $m_i = d/\mathsf{d}(r_i)$ for every $i \in \{1,\dots,n\}$. Since $p$ divides each summand on the right-hand side of equation \eqref{eq:lemma properties of D}, it must divide $\mathsf{d}(r_i)$ for some $i \in \{1,\dots,n\}$. Therefore $p \in \mathsf{D}_P(r_i)$, and the desired set inclusion follows.
	\end{proof}
	
	\begin{theorem} \label{thm:super atomicity of a prime-generated Puiseux monoid}
		Every prime reciprocal Puiseux monoid is hereditarily atomic.
	\end{theorem}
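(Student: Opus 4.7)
The plan is to induct on the integer part $k_x := \lfloor x \rfloor$ of $x \in N^\bullet$, using the fact that every $x \in M^\bullet$ has a unique canonical form $x = k_x + \sum_{p \in T_x} a_{x,p}/p$, where $T_x = \mathsf{D}_P(x)$ is a finite subset of $P$ and $a_{x,p} \in \{1,\ldots,p-1\}$ for each $p \in T_x$. The first step is to record how canonical forms add: when $x = y + z$ in $M^\bullet$, the carry set $S := \{p \in P : a_{y,p} + a_{z,p} \ge p\}$ satisfies $k_x = k_y + k_z + |S|$, and Lemma~\ref{lem:properties of D} (together with a short computation modulo each prime) implies that any prime in $(T_y \cup T_z) \setminus T_x$ must lie in $S$. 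In particular, when no carries occur, $T_y \cup T_z \subseteq T_x$, so both $y$ and $z$ belong to $(1/q)\nn_0$, where $q := \prod_{p \in T_x} p$.

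The base case $k_x = 0$ is handled directly. Any decomposition $x = y + z$ in $N^\bullet$ forces $k_y = k_z = 0$, hence $|S| = 0$ and $y, z \in (1/q)\nn_0 \cap (0, x)$. Because this set is finite, every divisor of $x$ in $N$ is bounded below by a fixed positive rational, which bounds the lengths of all factorizations of $x$ in $N$; a factorization of maximum length then consists entirely of atoms of $N$.

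For the inductive step, assuming the claim for every $x' \in N^\bullet$ with $k_{x'} < k_x$ and that $x$ is not an atom, I split the argument into two cases. If some decomposition $x = y + z$ in $N^\bullet$ has $k_y, k_z < k_x$ (Case A), the inductive hypothesis applied to $y$ and $z$ presents $x$ as a sum of atoms of $N$. Otherwise every decomposition has, after relabeling, $k_y = k_x$ and $k_z = 0$ (Case B), which forces $|S| = 0$ and $T_y \cup T_z \subseteq T_x$. Here $z$ is handled by the base case, while $y$ has the same integer part as $x$ but strictly smaller fractional part, and $y \in k_x + (1/q)\nn_0 \cap [0,1)$.

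Iterating the dichotomy on $y$ produces a chain $y_0 := y, y_1, y_2, \ldots$ in $N^\bullet$ whose elements all have integer part $k_x$ and whose fractional parts $y_n - k_x$ strictly decrease inside the finite set $(1/q)\nn_0 \cap [0, 1)$. Such a chain must terminate at some $y_m$ that is either an atom of $N$ or admits a Case A decomposition; in either case $y_m$ is a sum of atoms of $N$ (by the base case or the outer induction). Propagating back through the chain, each preceding $y_n$ is a sum of atoms (its ``fractional complement'' $z_{n+1}$ being covered by the base case), so $y$, and hence $x = y + z$, is a sum of atoms of $N$. The main delicacy I expect is the bookkeeping with the carry set $S$; once that is set up, the verification that the Case B chain stays inside $(1/q)\nn_0$ is immediate.
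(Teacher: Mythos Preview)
Your overall strategy is sound and genuinely different from the paper's, but there is one definitional slip that makes several of your stated claims false as written. You set $k_x := \lfloor x \rfloor$ and then assert the canonical form $x = k_x + \sum_{p\in T_x} a_{x,p}/p$ with $a_{x,p}\in\{1,\dots,p-1\}$. These two things are incompatible: for $x = 7/6 = 1\cdot\tfrac12 + 2\cdot\tfrac13 \in M$ (with $2,3\in P$) the unique partial-fraction decomposition is $7/6 = 0 + \tfrac12 + \tfrac23$, so the canonical integer is $0$, not $\lfloor 7/6\rfloor = 1$. Your carry identity $k_x = k_y + k_z + |S|$ then fails for the floor (where the ``carry'' is only $0$ or $1$) but holds verbatim for the canonical integer. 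The fix is simply to \emph{define} $k_x$ as the integer in the unique partial-fraction form; one checks that $k_x\ge 0$ for $x\in M$ by reducing each coefficient modulo its prime. With that change, $x - k_x$ lies in $[0,|T_x|)$ rather than $[0,1)$, so in Case~B your descending chain lives in the finite set $(1/q)\nn_0\cap[0,|T_x|)$, and the rest of your argument goes through unchanged.

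As for comparison: the paper argues by contradiction. It first proves a global finiteness statement in the ambient monoid $M$: for each fixed $x$, the number $N$ of distinct primes appearing in any representation $x = \sum_{i=1}^N \alpha_i/p_{n_i}$ is bounded (by $x+N_0$ for one fixed representation of length $N_0$). It then shows that an element $z\in M'\setminus\langle\mathcal A(M')\rangle$ could be repeatedly refined so that $|\mathsf D_P(\{r_1,\dots,r_n\})|$ grows without bound, contradicting that finiteness. Your route is direct and more local: you never leave the submonoid $N$, and you replace the global prime-count bound by a two-layer descent (strong induction on the canonical integer $k_x$, then a finite inner descent on the ``fractional part'' inside $(1/q)\nn_0$). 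Your argument is more constructive---it actually produces a factorization into atoms---whereas the paper's proof yields atomicity without exhibiting one; on the other hand, the paper's bound on $N$ is a clean standalone statement about $M$ that may be of independent interest.
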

	
	\begin{proof}
		Let $P$ be a set of primes, and let $M$ be the prime reciprocal Puiseux monoid generated by the set $\{1/p \mid p \in P \}$. If $P$ is finite, then $M$ is isomorphic to a numerical semigroup, and so every submonoid of $M$ is atomic. So we assume that $P$ contains infinitely many primes. Let $p_1, p_2, \dots$ be an increasing enumeration of the elements in~$P$. First, we show that for all $x \in M^\bullet$ there exist only finitely many $N \in \nn$ such that
		\begin{equation} \label{eq:super atomicity of a prime-generated Puiseux monoid 1}
			x = \sum_{i=1}^N \alpha_i \frac{1}{p_{n_i}},
		\end{equation}
		for some $\alpha_1, \dots, \alpha_N, n_1, \dots, n_N \in \nn$ with $n_1 < \dots < n_N$. Since $\{1/p \mid p \in P\}$ generates~$M$, there exists at least one natural $N_0$ such that equation \eqref{eq:super atomicity of a prime-generated Puiseux monoid 1} holds. Let us check that if a natural $N$ satisfies \eqref{eq:super atomicity of a prime-generated Puiseux monoid 1}, then $N \le x + N_0$. Suppose, by way of contradiction, that $N$ is a natural number greater than $x + N_0$ and
		\begin{equation} \label{eq:super atomicity of a prime-generated Puiseux monoid 2}
			x = \sum_{j=1}^N \beta_j \frac{1}{p_{m_j}}
		\end{equation}
		for some $\beta_1, \dots, \beta_N, m_1, \dots, m_N \in \nn$ with $m_1 < \dots < m_N$. Equation~\eqref{eq:super atomicity of a prime-generated Puiseux monoid 2} forces the cardinality of the set $\{j \in \{1,\dots, N\} \mid p_{m_j} \text{ divides} \ \beta_j\}$ to be at most $\lfloor x \rfloor$. Since $N > x + N_0 \ge \lfloor x \rfloor + N_0$, there exists $k \in \{1,\dots,N\}$ such that $p_{m_k} \notin \{p_{n_1}, \dots, p_{n_{N_0}}\}$ and $p_{m_k} \nmid \beta_k$. After equating both right-hand sides of equations \eqref{eq:super atomicity of a prime-generated Puiseux monoid 1} and \eqref{eq:super atomicity of a prime-generated Puiseux monoid 2}, and multiplying the resulting equality by $q = p_{n_1} \cdots p_{n_{N_0}} p_{m_1} \cdots, p_{m_N}$, one obtains
		\begin{equation} \label{eq:super atomicity of a prime-generated Puiseux monoid 3}
			\beta_k B_k + \sum_{j=1, j \neq k}^N \beta_j B_j - \sum_{i=1}^{N_0} \alpha_i A_i = 0,
		\end{equation}
		where $A_i = q/p_{n_i}$ for $i = 1,\dots, N_0$ and $B_j = q/p_{m_j}$ for $j = 1,\dots, N$. Note that every summand in \eqref{eq:super atomicity of a prime-generated Puiseux monoid 3} except the first one is divisible by $p_{m_k}$. But this is a contradiction and, therefore, every $N$ satisfying \eqref{eq:super atomicity of a prime-generated Puiseux monoid 1} is less than or equal to $x + N_0$. Hence there are only finitely many $N \in \nn$ satisfying \eqref{eq:super atomicity of a prime-generated Puiseux monoid 1}.
		
		Now, we are in a position to prove that every submonoid of $M$ is atomic. Let us assume, by way of contradiction, that $M$ contains a non-atomic submonoid $M'$. Fix $z \in M' \setminus \langle \mathcal{A}(M') \rangle$. Let $n \in \nn$ such that there exist $x_1, \dots, x_n \in M'^\bullet$ for which $z = x_1 + \dots + x_n$. Set $D = \mathsf{D}_P(x_1) \cup \dots \cup \mathsf{D}_P(x_n)$. Since $D$ contains only finitely many primes, the set
		\[
			I = \bigg\{n \in \nn \ \bigg| \ \exists \ r_1, \dots, r_n \in M'^\bullet : z = \sum_{i=1}^n r_i \ \text{ and } \ \bigcup_{i=1}^n \mathsf{D}_P(r_i) \subseteq D \bigg\}
		\]
		is finite. Take $m$ to be the maximum of $I$, and take $r_1, \dots, r_m \in M'^\bullet$ such that $z = r_1 + \dots + r_m$ and $\mathsf{D}_P(r_1) \cup \dots \cup \mathsf{D}_P(r_m) \subseteq D$. Since $z \notin \langle \mathcal{A}(M') \rangle$, there is an element of $\{r_1, \dots, r_m\}$, say $r_m$ without loss, that is not an atom of $M'$. Take $k \in \zz_{\ge 2}$ and $r'_1, \dots, r'_k \in M'^\bullet$\! so that $r_m = r'_1+ \dots + r'_k$. By the maximality of $m$, there exists $j \in \{1,\dots,k\}$ for which $\mathsf{D}_P(r_j')$ fails to be a subset of $D$. On the other hand, Lemma \ref{lem:properties of D} guarantees that $\mathsf{D}_P(r_m) \subseteq \mathsf{D}_P(r_1') \cup \dots \cup \mathsf{D}_P(r_k')$. Therefore
		\[
			|\mathsf{D}_P\big(\{r_1, \dots, r_m\}\big)| < |\mathsf{D}_P\big(\{r_1, \dots, r_{m-1}\} \cup \{r'_1, \dots, r'_k\}\big)|.
		\]
		So for every $N \in \nn$ there is a natural $n$ with $z = r_1 + \dots + r_n$ for some $r_1, \dots, r_n \in M'^\bullet$ such that $|\mathsf{D}_P(\{r_1, \dots, r_n\})| > N$. Writing each $r_j$ in $z = r_1 + \dots + r_n$ as a sum of elements in $\{1/p \mid p \in P\}$, we would be able to write $z$ as in equation \eqref{eq:super atomicity of a prime-generated Puiseux monoid 1} for infinitely many $N \in \nn$, which is a contradiction. Hence every submonoid of $M$ is atomic, i.e.,~$M$ is hereditarily atomic.
	\end{proof}

	The following corollary is an immediate consequence of Theorem~\ref{thm:super atomicity of a prime-generated Puiseux monoid}.
	
	\begin{cor}
		If $S \subseteq \qq_{> 0}$ satisfies that $\mathsf{d}(s)$ is either $1$ or prime for all $s \in S$, then the Puiseux monoid generated by $S$ is hereditarily atomic.
	\end{cor}

	\medskip
	\section{Multiplicatively Cyclic Puiseux Monoids} \label{sec:Multiplicative Cyclic Puiseux Monoids}
	
	We know that finitely generated Puiseux monoids are isomorphic to numerical semigroups. It is natural to wonder which are the simplest families of Puiseux monoids that are not isomorphic to numerical semigroups. Since the members of such families must be infinitely generated, it would be convenient to look into classes of Puiseux monoids infinitely generated by well-behaved sequences of rationals.
	
	Numerical semigroups generated by intervals, arithmetic sequences, and generalized arithmetic sequences have been intensely studied (see \cite{GR99,ACHP07,gM04} and the references therein). In particular, the simplicity of arithmetic sequences has facilitated the exploration of the combinatorial and algebraic structure of the numerical semigroups they generate as well as the factorization invariants such numerical semigroups exhibit. We may want to study, in principle, the family of Puiseux monoids generated by arithmetic sequences (which happen to be increasing). However, notice that if a Puiseux monoid $M$ is generated by an arithmetic sequence $\{r +ns\}$, where $r,s \in \qq_{> 0}$, then the map $x \mapsto \mathsf{d}(r)\mathsf{d}(s)x$ defines an isomorphism from $M$ onto the numerical semigroup $\langle \mathsf{d}(s)\mathsf{n}(r) + n \mathsf{d}(r) \mathsf{n}(s) \mid n \in \nn \rangle$. So Puiseux monoids generated by arithmetic sequences are isomorphic to numerical semigroups.
	
	By contrast, Puiseux monoids generated by geometric sequences are not necessarily finitely generated. For example, if $z \in \zz_{\ge 2}$, the Puiseux monoid $M = \langle 1/z^n \mid n \in \nn \rangle$ is antimatter, and so it fails to be finitely generated. We might expect that the controlled behavior of a rational geometric sequence leads us to a better understanding of the atomicity and boundedness of the Puiseux monoid it generates. In this section, we will explore the atomicity and boundedness of those Puiseux monoids that can be generated by geometric sequences.
	
	\begin{definition}
		For $r \in \qq_{> 0}$, we call the Puiseux monoid generated by the positive powers of $r$ the \emph{multiplicatively $r$-cyclic} (or just \emph{multiplicatively cyclic}) and we denote it by $M_r$, that is, $M_r = \langle r^n \mid n \in \nn \rangle$.
	\end{definition} 
	
	\noindent \textbf{Remarks:} Note that Puiseux monoids of the form $\langle ar^n \mid n \in \nn \rangle$ for $a,r \in \qq_{> 0}$ are not more general than those we defined as multiplicatively $r$-cyclic; this is because multiplication by $a$ gives an isomorphism of Puiseux monoids. Similarly, the Puiseux monoid $\langle r^n \mid n \in \nn_0 \rangle$ is isomorphic to the multiplicatively cyclic monoid $M_r$. However, note that $\langle r^n \mid n \in \nn_0 \rangle$ is a cyclic rational semiring with identity, while $M_r$ does not contain an identity as a rational semiring. We emphasize that when dealing with $M_r$, we will only be interested in its algebraic structure with respect to the addition, so we shall study its atomicity as an additive monoid.
	
	\vspace{5pt}
	
	We recall that an antimatter monoid is a monoid having no atoms. The next theorem describes the sets of atoms of multiplicatively cyclic Puiseux monoids, indicating, in particular, which of these monoids are atomic.
	
	\begin{theorem} \label{thm:atomic classification of multiplicative cyclic Puiseux monoids}
		For $r \in \qq_{> 0}$, let $M_r$ be the multiplicatively $r$-cyclic Puiseux monoid. Then the following statements hold.
		\begin{itemize}
			\item If $\mathsf{d}(r)=1$, then $M_r$ is atomic with $\mathcal{A}(M_r) = \{\mathsf{n}(r)\}$.
			\vspace{3pt}
			\item If $\mathsf{d}(r) > 1$ and $\mathsf{n}(r) = 1$, then $M_r$ is antimatter.
			\vspace{3pt}
			\item If $\mathsf{d}(r) > 1$ and $\mathsf{n}(r) > 1$, then $M_r$ is atomic with $\mathcal{A}(M_r) = \{r^n \mid n \in \nn\}$.
		\end{itemize}
	\end{theorem}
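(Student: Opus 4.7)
The theorem splits into three cases according to whether $\mathsf{n}(r)$ and $\mathsf{d}(r)$ equal $1$, and I would handle them in turn, with the bulk of the work concentrated on Case~3.

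For Case~1, $r = \mathsf{n}(r)$ is a positive integer, so each $r^n$ equals $r^{n-1}\cdot r$, a sum of copies of $r$. Thus $M_r = \langle r \rangle$ is isomorphic to $\nn_0$ and has unique atom $r = \mathsf{n}(r)$. For Case~2, set $d = \mathsf{d}(r) \geq 2$, pick any $x \in M_r^\bullet$, and bring a generating expression $x = \sum c_i r^i$ to a common denominator to write $x = A/d^N$ with $A,N \in \nn$. Rewriting $x = (Ad)\,r^{N+1}$ exhibits $x$ as a sum of $Ad \geq 2$ copies of the nonzero element $r^{N+1}$, so $x$ is not an atom, whence $M_r$ is antimatter.

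Case~3 is the core of the argument. With $m = \mathsf{n}(r) \geq 2$ and $d = \mathsf{d}(r) \geq 2$ coprime, I need to show each $r^n$ is an atom; the reverse inclusion $\mathcal{A}(M_r) \subseteq \{r^n \mid n \in \nn\}$ is automatic since $M_r$ is reduced and $\{r^n\}$ generates it. Suppose $r^n = \sum_{i=1}^k a_i r^{n_i}$ with $a_i, n_i \in \nn$. Each summand being at most $r^n$ gives $r^{n_i}\leq r^n$, which (since $r \neq 1$) forces $n_i \leq n$ when $r > 1$ and $n_i \geq n$ when $r < 1$.

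When $r > 1$, I would clear denominators to obtain $m^n = \sum a_i m^{n_i} d^{n-n_i}$ and reduce modulo $d$; using $\gcd(m,d)=1$, only terms with $n_i = n$ survive, giving $\sum_{n_i=n} a_i \equiv 1 \pmod d$. Together with the size bound $\sum_{n_i=n} a_i \leq 1$ and $d \geq 2$, this sum must equal $1$, which collapses the representation to the trivial one. When $r < 1$, I would divide by $r^n$ to reduce to showing that the only way to write $1 = \sum a_i r^{j_i}$ with $j_i = n_i - n \geq 0$ is with a single term of $j_i = 0$ and $a_i = 1$. If some $j_i = 0$, its contribution is already $\geq 1$, and positivity of the remaining summands collapses everything to that single term. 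Otherwise all $j_i \geq 1$; multiplying by $d^K$ with $K = \max j_i$ gives $d^K = \sum a_i m^{j_i} d^{K-j_i}$, and reducing modulo $m$ makes the right-hand side vanish while $d^K \not\equiv 0 \pmod m$ by $\gcd(m,d) = 1$, a contradiction.

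The main obstacle is the $r < 1$ subcase of Case~3, where summands can be arbitrarily small and size estimates alone do not pin down $n_i = n$. Normalizing by $r^n$ so the equation becomes a representation of $1$, and then splitting on whether any $j_i$ vanishes, is the essential trick: the first branch is a positivity statement about sums of positive reals, and the second branch is killed by a modulo $m$ reduction that crucially uses both $m \geq 2$ and $\gcd(m,d)=1$.
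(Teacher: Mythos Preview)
Your proof is correct. The overall architecture matches the paper's---split into the same three cases, and in Case~3 split further into $r>1$ and $r<1$---but the tools you use in Case~3 are more elementary than the paper's. For $r>1$ the paper invokes Proposition~\ref{prop:atoms of increasing monoids} (increasing Puiseux monoids are atomic, with atoms characterized as those generators not in the span of earlier ones), so it only needs to rule out $r^n \in \langle r,\dots,r^{n-1}\rangle$; you instead give a direct argument that any decomposition of $r^n$ collapses, via a mod-$d$ reduction combined with the size bound $\sum_{n_i=n}a_i\le 1$. For $r<1$ the paper uses the $p$-adic valuation for a prime $p\mid \mathsf{n}(r)$ to derive the contradiction, whereas you clear denominators and reduce mod $m$. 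Both of your arguments are the same divisibility obstruction in slightly different clothing; what you gain is self-containment (no appeal to Proposition~\ref{prop:atoms of increasing monoids} or to valuation machinery), while the paper's version is a bit shorter in the $r>1$ subcase by leaning on the earlier structural result.
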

	
	\begin{proof}
		Set $a = \mathsf{n}(r)$ and $b = \mathsf{d}(r)$. If $b = 1$, then $M_r = \langle a^n \mid n \in \nn \rangle = \langle a \rangle$, which immediately implies that $M_r$ is atomic and $\mathcal{A}(M_r) = \{a\}$. Suppose now that $a = 1$ and $b > 1$. In this case, $M_r = \langle 1/b^n \mid n \in \nn \rangle$. Since $1/b^n = b(1/b^{n+1})$ for every $n \in \nn$, it follows that $M_r$ is antimatter.
		
		Set $R = \{r^n \mid n \in \nn\}$. We proceed to show the last statement, that is, the case where $a > 1$ and $b > 1$. First, we argue the case $a > b$. Since $\{r^n\}$ is an increasing sequence generating $M_r$, by Proposition \ref{prop:atoms of increasing monoids} the monoid $M_r$ is atomic. Let us find $\mathcal{A}(M_r)$. Take $n \in \nn$ such that $n > 1$, and suppose that there exist $k, c_k \in \nn$ and $c_i \in \nn_0$ for every $i = 1,\dots,k-1$ satisfying
		\begin{equation} \label{eq:multiplicative cyclic 1}
			\frac{a^n}{b^n} = c_1\frac{a}{b} + \dots + c_k\frac{a^k}{b^k}.
		\end{equation}
		If $k < n$, then after multiplying equation \eqref{eq:multiplicative cyclic 1} by $b^n$, it can be easily seen that every prime divisor of $b$ must divide $a$, which is not possible because $\gcd(a,b) = 1$. Therefore $r^n \notin \langle r, \dots, r^{n-1} \rangle$ for any $n \in \nn$. By Proposition~\ref{prop:atoms of increasing monoids}, one has $\mathcal{A}(M_r) = R$.
		
		Finally, suppose $a < b$. Take $n \in \nn$, and write
		\begin{equation} \label{eq:multiplicative cyclic 2}
			\frac{a^n}{b^n} = c_n\frac{a^n}{b^n} + \dots + c_{n+k}\frac{a^{n+k}}{b^{n+k}},
		\end{equation}
		where $k \in \nn_0$ and $c_i \in \nn_0$ for every $i = n, \dots, n+k$. Notice that $c_n \in \{0,1\}$. Suppose, by way of contradiction, that $c_n = 0$. In this case, $k \ge 1$. Let $p$ be a prime dividing~$a$, and let $\alpha$ be the maximum power of $p$ dividing $a$. Applying the $p$-adic valuation function to equation \eqref{eq:multiplicative cyclic 2}, one obtains
		\begin{equation} \label{eq:multiplicative cyclic 3}
			\alpha n = \pval\bigg(\frac{a^n}{b^n}\bigg) = \pval\bigg(\sum_{i=1}^k c_{n+i} \frac{a^{n+i}}{b^{n+i}}\bigg) \ge \min_{1 \le i \le k}\bigg\{\pval\bigg(c_{n+i} \frac{a^{n+i}}{b^{n+i}}\bigg)\bigg\} \ge \alpha(n+m),
		\end{equation}
		where $m = \min\{i \in \{1, \dots, k\} \mid c_{n+i} \neq 0\}$. Inequality \eqref{eq:multiplicative cyclic 3} yields a contradiction because $m \ge 1$. Therefore $c_n = 1$, and so $c_{n+i} = 0$ for every $i \ge 1$. Since $(a/b)^n$ cannot be expressed in a nontrivial way as a sum of elements in $R$, one finds that $(a/b)^n$ is an atom. Hence $R$ is the set of atoms of $M_r$ and, as a result, $M_r$ is atomic.
	\end{proof}
	
	With notation as in Theorem~\ref{thm:atomic classification of multiplicative cyclic Puiseux monoids}, if $\mathsf{n}(r) = 1$ or $\mathsf{d}(r)=1$, then the multiplicatively cyclic Puiseux monoid $M_r$ is strongly bounded. If $\mathsf{n}(r), \mathsf{d}(r) > 1$ and $r < 1$, then $M_r$ is bounded. However $M_r$ cannot be strongly bounded because every generating set of $M_r$ must contain the set $R = \{r^n \mid n \in \nn\}$, which is not strongly bounded. By a similar argument, $M_r$ is not bounded when $\mathsf{n}(r), \mathsf{d}(r) > 1$ and $r > 1$.
	
	\begin{cor}
		For $r \in \qq_{> 0}$, let $M_r$ be the multiplicatively $r$-cyclic Puiseux monoid. Then the following statements hold.
		\begin{itemize}
			\item If $\mathsf{n}(r)=1$ or $\mathsf{d}(r)=1$, then $M_r$ is strongly bounded. \vspace{3pt}
			\item If $\mathsf{n}(r), \mathsf{d}(r) > 1$ and $r < 1$, then $M_r$ is bounded but not strongly bounded.
			\vspace{3pt}
			\item If $\mathsf{n}(r), \mathsf{d}(r)  > 1$ and $r > 1$, then $M_r$ is not bounded.
		\end{itemize}
	\end{cor}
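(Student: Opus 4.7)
The proof plan is to read off everything from Theorem~\ref{thm:atomic classification of multiplicative cyclic Puiseux monoids}, which already pins down $\mathcal{A}(M_r)$ in each case, together with the basic fact that for a reduced atomic monoid every generating set must contain the set of atoms.

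First I would dispose of case~(1). If $\mathsf{d}(r)=1$, then $r=\mathsf{n}(r)\in\nn$ and $M_r=\langle \mathsf{n}(r)\rangle$, which is already generated by a singleton, trivially giving a bounded numerator set. If instead $\mathsf{n}(r)=1$, then $r^n=1/\mathsf{d}(r)^n$ for every $n\in\nn$, so the generating set $\{r^n\mid n\in\nn\}$ has $\mathsf{n}(\{r^n\mid n\in\nn\})=\{1\}$, showing $M_r$ is strongly bounded.

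For case~(2) I would first note that $r<1$ implies $\{r^n\mid n\in\nn\}\subset(0,1)$, so this generating set is bounded and therefore $M_r$ is bounded. To see $M_r$ is not strongly bounded, I invoke Theorem~\ref{thm:atomic classification of multiplicative cyclic Puiseux monoids} to conclude that $\mathcal{A}(M_r)=\{r^n\mid n\in\nn\}$. Since $M_r$ is reduced and atomic, every generating set $S$ of $M_r$ must satisfy $\mathcal{A}(M_r)\subseteq S$, and hence $\mathsf{n}(S)\supseteq\{\mathsf{n}(r)^n\mid n\in\nn\}$. Because $\mathsf{n}(r)>1$, the set $\{\mathsf{n}(r)^n\mid n\in\nn\}$ is unbounded, so no generating set can have a bounded numerator set.

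Case~(3) runs on the same inclusion $\mathcal{A}(M_r)\subseteq S$ for any generating set $S$, which again comes from Theorem~\ref{thm:atomic classification of multiplicative cyclic Puiseux monoids}. Here $r>1$ forces $\{r^n\mid n\in\nn\}$ itself to be unbounded, so $S$ is unbounded and $M_r$ is not a bounded Puiseux monoid. There is no real obstacle in this corollary: the entire content is already encoded in the atomic classification theorem, and the only small care needed is the passage from ``the atom set is contained in every generating set'' to ``no generating set has the desired boundedness property,'' which I would state explicitly once at the beginning rather than repeat in each case.
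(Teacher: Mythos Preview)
Your proposal is correct and matches the paper's own argument essentially line for line: the paper disposes of the corollary in the short paragraph preceding its statement by noting that $M_r$ is strongly bounded in case~(1), bounded in case~(2) because $r<1$, and then invokes Theorem~\ref{thm:atomic classification of multiplicative cyclic Puiseux monoids} to observe that every generating set must contain $R=\{r^n\mid n\in\nn\}$, which is not strongly bounded in case~(2) and not bounded in case~(3). Your write-up is slightly more explicit (spelling out $\mathsf{n}(r^n)=\mathsf{n}(r)^n$), but the approach is identical.
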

	
	As illustrated by Corollary~\ref{cor:strongly bounded is not hereditary on primary PM}, being bounded (or strongly bounded) is not hereditary on the class of prime reciprocal Puiseux monoids. Additionally, boundedness (resp., strong boundedness) is not hereditary on the class of bounded (resp., strongly bounded) multiplicatively cyclic Puiseux monoids.
	
	\begin{example}
		Let $M$ be the multiplicatively (1/2)-cyclic Puiseux monoid, that is, $M = \langle 1/2^n \mid n \in \nn \rangle$. It is strongly bounded, and yet its submonoid
		\begin{equation} \label{eq:strong boundedness is not hereditary even in multiplicatively cyclic PM}
			N = \bigg\langle \sum_{i=1}^n \frac{1}{2^i} \ \bigg{|} \ n \in \nn \bigg\rangle = \bigg\langle \frac{2^n - 1}{2^n} \ \bigg{|} \ n \in \nn \bigg\rangle
		\end{equation}
		is not strongly bounded; to see this, it is enough to verify that $\mathcal{A}(N) = S$, where $S$ is the generating set defining $N$ in \eqref{eq:strong boundedness is not hereditary even in multiplicatively cyclic PM}. Note that the sum of any two elements of the generating set $S$ is at least one, while every element of $S$ is less than one. Therefore each element of $S$ must be an atom of $N$, and so $\mathcal{A}(N) = S$.
	\end{example}

	We conclude this paper by showing that boundedness (resp., strong boundedness) is almost never hereditary on the class of bounded (resp., strongly bounded) multiplicatively cyclic Puiseux monoids.
	
	\begin{proposition}
		For $r \in \qq_{> 0}$, let $M_r$ be the multiplicatively $r$-cyclic Puiseux monoid. Then every submonoid of $M$ is bounded (or strongly bounded) if and only if $M_r$ is isomorphic to a numerical semigroup.
	\end{proposition}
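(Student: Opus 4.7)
By Theorem~\ref{thm:atomic classification of multiplicative cyclic Puiseux monoids} together with Lemma~\ref{lem:denominator set of like-NS PM}, $M_r$ is isomorphic to a numerical semigroup if and only if $\mathsf{d}(r)=1$: in that case $M_r = \langle \mathsf{n}(r)\rangle\cong \nn_0$, while if $\mathsf{d}(r)>1$ then the denominator set $\mathsf{d}(M_r^\bullet)$ contains each $\mathsf{d}(r)^n$ and is infinite. Thus the equivalence to prove becomes: every submonoid of $M_r$ is bounded (equivalently, strongly bounded) if and only if $\mathsf{d}(r)=1$. The backward implication is immediate because any submonoid of $M_r\cong\nn_0$, after division by the gcd of its nonzero elements, is a numerical semigroup and hence finitely generated, so in particular strongly bounded.

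For the forward direction I argue the contrapositive: whenever $\mathsf{d}(r)>1$, I exhibit a submonoid $N$ of $M_r$ that is atomic with unbounded set of atoms; such $N$ admits no bounded generating set. If $r>1$, then Theorem~\ref{thm:atomic classification of multiplicative cyclic Puiseux monoids} already gives $\mathcal{A}(M_r) = \{r^n : n\in\nn\}$ as an unbounded set, so $N := M_r$ works.

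The main effort is the case $r<1$. Set
\[
a_n := n r^2 + r^{n+2} \in M_r \quad (n\in\nn), \qquad N := \langle a_n : n\in\nn\rangle,
\]
and the aim is to prove that $\mathcal{A}(N)=\{a_n:n\in\nn\}$, which is unbounded because $a_n\to\infty$. First, $\{a_n\}$ is strictly increasing since $a_{n+1}-a_n = r^2\bigl(1 - r^n(1-r)\bigr) > 0$. Second, I pick any prime $p$ dividing $\mathsf{d}(r)$ and set $e:=\pval(\mathsf{d}(r))\geq 1$; since $\pval(n r^2)\geq -2e > -(n+2)e = \pval(r^{n+2})$ for all $n\geq 1$, I obtain $\pval(a_n)=-(n+2)e$. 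Now given any decomposition $a_n = c_1 a_{n_1}+\dots+c_\ell a_{n_\ell}$ with $c_i \in \nn$ and distinct $n_i\in\nn$, the bounds $c_i a_{n_i}\leq a_n$ combined with monotonicity of $\{a_m\}$ force $n_i\leq n$ for every $i$; letting $n_k := \max_i n_i$, the ultrametric inequality and the formula for $\pval(a_{n_i})$ give $\pval(a_n)\geq -(n_k+2)e$ with strict inequality unless $\pval(c_k)=0$, whence $n_k = n$ and $\pval(c_k) = 0$. A second appeal to magnitudes forces $c_k = 1$ (else $c_k a_n > a_n$), and the remaining nonnegative terms must then vanish, so $\ell=1$ and the decomposition is trivial. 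Hence $a_n \in \mathcal{A}(N)$.

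The main obstacle is precisely this construction: one needs a sequence in $M_r$ whose magnitude argument (for unboundedness) and $p$-adic argument (for atomicity) both succeed simultaneously. The double structure of $a_n$ supplies exactly the two tools required, as the arithmetic term $nr^2$ provides monotone unbounded growth while the geometric tail $r^{n+2}$ carries the $p$-adic valuation $-(n+2)e$, which in the ultrametric framework distinguishes $a_n$ from every $a_{n_i}$ with $n_i<n$.
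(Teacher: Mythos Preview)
Your proof is correct and follows essentially the same strategy as the paper: establish the contrapositive of the forward implication by constructing, whenever $\mathsf{d}(r)>1$, an atomic submonoid of $M_r$ whose set of atoms is unbounded, with atomicity verified via an increasing-sequence argument combined with a $p$-adic valuation computation for a prime $p\mid \mathsf{d}(r)$.

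The only notable difference is in the explicit construction. The paper uses the single sequence $s_n=\dfrac{(nb^n+1)a^n}{b^n}=(nb^n+1)r^n$, which works uniformly for all $r$ with $b=\mathsf{d}(r)>1$; you instead split into the case $r>1$ (where $M_r$ itself already has unbounded atom set) and the case $r<1$, where your sequence $a_n=nr^2+r^{n+2}$ plays the same role. Both sequences share the same two-part design: a large summand that forces unbounded growth, and a tail whose $p$-adic valuation strictly decreases in $n$ so that the ultrametric inequality pins down which generator must appear in any decomposition. Your observation that $\pval(c_k)=0$ is harmless but unnecessary, since the magnitude bound $c_k a_n\le a_n$ already gives $c_k=1$ directly.
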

	
	\begin{proof}
		Let $a$ and $b$ denote $\mathsf{n}(r)$ and $\mathsf{d}(r)$, respectively. To prove the direct implication, suppose, by way of contradiction, that $M_r$ is not isomorphic to a numerical semigroup. In this case, $b > 1$. Consider the submonoid $N = \langle s_1, s_2, \dots \rangle$ of $M$, where
		\[
			s_n = \frac{(nb^n + 1)a^n}{b^n}
		\]
		for every natural $n$. Proving the forward implication amounts to verifying that $N$ is not bounded and, as a consequence, not strongly bounded. First, let us check that $\mathcal{A}(N) = \{s_n \mid n \in \nn\}$. Note that
		\[
			s_{n+1} = \frac{((n+1)b^{n+1} + 1)a^{n+1}}{b^{n+1}} > \frac{(nb^{n+1} + b)a^n}{b^{n+1}} = s_n
		\]
		for each $n \in \nn$, and so $\{s_n\}$ is an increasing sequence. Moreover, it is easy to see that $s_n > n$ for every $n$. Thus, $\{s_n\}$ is unbounded. Suppose that there exist $k, \alpha_k \in \nn$ and $\alpha_i \in \nn_0$ for every $i = 1, \dots, k-1$ such that $s_n = \alpha_1s_1 + \dots + \alpha_k s_k$. Since $\{s_n\}$ is increasing and $\alpha_k > 0$, we have $k \le n$. Let $p$ be a prime divisor of $b$, and let $m = \pval(b)$. The fact that $p \nmid (nb^n + 1)a^n$ for every natural $n$ implies $\pval(s_n) = -mn$. Therefore
		\[
			-mn = \pval(s_n) \ge \min_{1 \le i \le k}\{\pval(\alpha_is_i)\} \ge \min_{1 \le i \le k}\{\pval(s_i)\} = -mk,
		\]
		which implies that $k \ge n$. Thus, $k=n$ and then $\alpha_1 = \dots = \alpha_{n-1} = 0$ and $\alpha_n = 1$. So $s_n \notin \langle s_1, \dots, s_{n-1} \rangle$ for every $n \in \nn$ and, by Proposition~\ref{prop:atoms of increasing monoids}, $\mathcal{A}(N) = \{s_n \mid n \in \nn\}$. Since $\mathcal{A}(N)$ is unbounded, $N$ cannot be a bounded Puiseux monoid.
		
		On the other hand, if $M_r$ is isomorphic to a numerical semigroup, then it is finitely generated and, hence, bounded and strongly bounded. This gives us the converse implication.
	\end{proof}
	
	\section*{Acknowledgments}
	
	While working on this paper, the first author was supported by the UC Berkeley Chancellor Fellowship, and the second author was under the University of Florida Mathematics Department Fellowship. With pleasure both authors thank Scott Chapman and the anonymous referee for valuable suggestions which resulted in an improvement of this paper.

\end{document}